\newtheorem{thm}{Theorem}
\newtheorem{cor}[thm]{Corollary}
\newtheorem{lem}[thm]{Lemma} 
\newtheorem{rem}[thm]{Remark} 
\newtheorem{prop}[thm]{Proposition} 
\newtheorem{porism}[thm]{Porism} 
\newtheorem{example}[thm]{Example}
\begin{document}

\title{On the signature of a positive braid}

\author{Joshua Evan Greene}
\author{Livio Liechti}
\email{joshua.greene@bc.edu}
\email{livio.liechti@unifr.ch}

\begin{abstract} 
We show that the signature of a positive braid link is bounded from below by one-quarter of its first Betti number. This equates to one-half of the optimal bound conjectured by Feller, who previously provided a bound of one-eighth.
\end{abstract}

\maketitle

\section{Introduction}

The signature~$\sigma(L)$ of an oriented link~$L$ was introduced by Trotter~\cite{Trotter}.
Its definition leads to the classic lower bound~$\sigma(L) \le b_1(L)$ on the minimum first Betti 
number of any Seifert surface for~$L$.
For certain classes of links, this bound can be reversed, up to scale\footnote{While this does not 
coincide with the most commonly used definitions, we adopt Rudolph's convention that positive 
links have positive signature.}.
The first such result is due to Feller, who proved that~$\sigma(L) \ge \frac{1}{100} b_1(L)$ for a 
positive braid closure~$L$ and conjectured an optimal slope of~$\frac12$ for these links~\cite{Peschito}.
Notably, Rudolph had earlier shown that~$\sigma(L) \ge 0$, and Stoimenow had shown 
that~$\sigma(L) \ge b_1(L)^{1/3}$, for a positive braid closure $L$~\cite{Rudolph,Stoimenow}.
Subsequently, the slope was improved to~$\frac{1}{24}$ for the more general class of positive 
links by Baader, Dehornoy and the second author~\cite{BDL}, and to~$\frac18$ by Feller for 
positive braid closures~\cite{Pegascito}.
\smallskip

Our goal is to establish the following result, striking within one-half of Feller's conjectured bound:

\begin{thm}
\label{quarter_thm}
For every positive braid closure~$L$ that is not an unlink, we have
\[
\sigma(L)\ge \frac{b_1(L)}{4}+\frac{1}{2}.
\]
\end{thm}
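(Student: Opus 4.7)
The plan is to prove the inequality by induction on the first Betti number $b_1(L)$. Write $L$ as the closure of a positive braid $\beta$ on $n$ strands with $c$ letters, so that $b_1(L) = c - n + 1$. After finitely many Markov destabilizations (which preserve $L$, $b_1(L)$, and $\sigma(L)$), one may assume every generator occurring in $\beta$ occurs at least twice. The base cases $b_1(L) \le 4$ should be verified directly: for instance, the positive Hopf link gives $\sigma = 1 \ge 3/4$, the positive trefoil gives $\sigma = 2 \ge 1$, and so on through the few small positive links, comfortably exceeding the required bound.

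For the inductive step with $b_1(L) \ge 5$, the essential ingredient is a \emph{reducing move}: a procedure replacing $\beta$ by a positive braid word $\beta'$ whose closure $L'$ satisfies
\[
b_1(L) - b_1(L') \le 4 \quad\text{and}\quad \sigma(L) - \sigma(L') \ge 1.
\]
Granted such a move, the condition $b_1(L) \ge 5$ guarantees $b_1(L') \ge 1$, so $L'$ is not an unlink and the inductive hypothesis applies, yielding
\[
\sigma(L) \ge \sigma(L') + 1 \ge \frac{b_1(L')}{4} + \frac{1}{2} + 1 \ge \frac{b_1(L)}{4} + \frac{1}{2}.
\]

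The main obstacle is thus to exhibit this reducing move for every normalized non-trivial positive braid. A natural framework is to work with the Bennequin surface $F$ of $\beta$ and its symmetrized Seifert form $V + V^{\top}$ in the hexagonal basis indexed by pairs of consecutive same-index crossings; in this basis the entries are sparse and dictated directly by the combinatorics of $\beta$. One then looks for a codimension-$4$ subsurface $F' \subset F$ — in the best case, the complement of a block of four Hopf bands attached in a prescribed configuration, or a specific length-$4$ subword pattern such as two disjoint $\sigma_i^{2}$-factors — for which Cauchy interlacing on $V + V^{\top}$ forces the required unit drop in signature. The delicate point, and I expect it to be the technical heart of the proof, is showing that such a configuration is \emph{guaranteed} to occur in every normalized positive braid: Feller's $b_1/8$ bound is obtained from a reducing move at a $8{:}1$ ratio, and improving this to the required $4{:}1$ ratio needs a pattern that is twice as efficient, so the novelty must lie in exploiting a finer feature of the Bennequin surface than is used in the prior arguments. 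The additive constant $1/2$ in the conclusion matches the slack present in the Hopf link base case and propagates intact through the induction.
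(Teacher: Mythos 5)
Your argument has a genuine gap at its heart: the \emph{reducing move} is asserted but never constructed, and it is precisely the existence of such a move that constitutes the entire difficulty of the theorem. You correctly identify this yourself (``the main obstacle is thus to exhibit this reducing move''), but then only gesture at where one might look for it --- a codimension-$4$ subsurface of the Bennequin surface, ``a specific length-$4$ subword pattern,'' Cauchy interlacing on $V+V^{\top}$ --- without proving that any such configuration exists in every normalized positive braid, or that when it exists it forces $\sigma(L)-\sigma(L')\ge 1$ while $b_1(L)-b_1(L')\le 4$. As it stands the proposal reduces the theorem to an unproved combinatorial claim that is at least as hard as the theorem itself. (Two smaller issues: Markov destabilization only removes a generator occurring once at the \emph{end} of the braid; a generator occurring exactly once in the interior produces a connected sum, which must be handled by additivity of $\sigma$ and $b_1$ rather than by destabilization. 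And the base cases $b_1\le 4$ comprise finitely many links only \emph{after} normalization, which should be said explicitly.)

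It is also worth knowing that the paper does not follow this induction-plus-reducing-move template at all, precisely because pushing that template past the $\tfrac18$ slope appears hard. Instead it works with the two chessboard surfaces of the standard diagram and the Gordon--Litherland formula $2\sigma(L)=\sigma(G_B)+\sigma(G_W)+\mathrm{cr}(D)$, then restricts each Goeritz form to four carefully chosen subspaces $X_0,\dots,X_3$ (indexed by the residue mod $4$ of the column index) whose Gram matrices decompose into tridiagonal and ``trisum'' blocks. Explicit signature lower bounds for those blocks (Propositions~\ref{diagonal_sum_prop} and~\ref{extended_bound_prop}), combined with the monotonicity principle $\sigma(G)+\dim H\ge\sigma(G|_X)+\dim X$ and an Euler-characteristic count of faces, yield the $\tfrac14$ slope in one global step, with no induction on $b_1$. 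If you want to salvage your approach you would need to actually produce the $4{:}1$ reducing move; the paper's discussion section suggests the authors did not see how to do this, which is why they changed frameworks.
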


Our proof of Theorem~\ref{quarter_thm} uses the signature formula of Gordon and Litherland for 
the Goeritz form of chessboard surfaces~\cite{GL}, as in~\cite{BDL,Pegascito}.
Our advance in the case of a positive braid closure stems from a careful choice of subspaces on which 
we are able to tightly control the signature of the Goeritz form.
\smallskip

One-half of the signature of a knot is a lower bound for the topological four-genus~\cite{KT, Murasugi}. 
Theorem~\ref{quarter_thm} immediately implies the following lower bound for the topological four-genus 
in terms of the usual Seifert genus. 

\begin{cor}
\label{4genus_cor}
The topological four-genus of a positive braid knot is greater than one-quarter of the Seifert genus. 
\end{cor}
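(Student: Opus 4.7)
The plan is to chain Theorem~\ref{quarter_thm} with the classical inequality $g_4^{\mathrm{top}}(K) \ge |\sigma(K)|/2$ for a knot $K$, attributed in the introduction to Kauffman-Taylor and Murasugi~\cite{KT,Murasugi}.

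First I would translate the invariant $b_1(K)$, defined in the introduction as the minimum first Betti number of any Seifert surface of $K$, into the Seifert genus $g(K)$. For any Seifert surface $F$ of a knot, $\chi(F) = 1 - 2g(F)$, and hence $b_1(F) = 2g(F)$; minimizing over all such $F$ gives $b_1(K) = 2g(K)$.

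Next, assuming $K$ is a non-trivial positive braid knot so that Theorem~\ref{quarter_thm} applies, the theorem yields
\[
\sigma(K) \ge \frac{b_1(K)}{4} + \frac{1}{2} = \frac{g(K)}{2} + \frac{1}{2}.
\]
Halving and invoking the Kauffman-Taylor-Murasugi bound then gives
\[
g_4^{\mathrm{top}}(K) \ge \frac{\sigma(K)}{2} \ge \frac{g(K)}{4} + \frac{1}{4} > \frac{g(K)}{4},
\]
as claimed.

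I do not anticipate any substantive obstacle: the corollary is a direct translation of Theorem~\ref{quarter_thm} via the Kauffman-Taylor-Murasugi inequality, with the additive $+\tfrac{1}{2}$ in the theorem providing exactly the slack needed to upgrade the non-strict signature bound to the strict four-genus bound in the conclusion.
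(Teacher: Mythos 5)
Your proposal is correct and matches the paper's intended argument: the corollary is stated as an immediate consequence of Theorem~\ref{quarter_thm} combined with the bound $g_4^{\mathrm{top}}(K) \ge |\sigma(K)|/2$ from~\cite{KT, Murasugi}, exactly as you chain them, with $b_1(K) = 2g(K)$ for a knot and the additive $+\frac{1}{2}$ supplying the strictness. (The only implicit caveat, which you correctly flag, is that the unknot must be excluded for the strict inequality to hold.)
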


For the consequences of Corollary~\ref{4genus_cor} with respect to concordance, 
we refer to the discussions by Stoimenow~\cite{Stoimenow} as well as Baader, Dehornoy and the 
second author~\cite{BDL}.
\smallskip

Another application of Theorem~\ref{quarter_thm} concerns the location of the zeroes of the 
Alexander polynomial of positive braid closures. Dehornoy noted that for random positive braids, 
these zeroes seem to accumulate on rather specific lines depending on the braid index and the 
probability of the braid generators, as in Figure 3.4 and the discussion surrounding it in~\cite{Dehornoy}. 
Since the signature is a lower bound for the number of zeroes of the Alexander polynomial that lie 
on the unit circle~\cite{GilmLiv, Lie16}, Theorem~\ref{quarter_thm} at least explains why there is a 
substantial number of zeroes on the unit circle. 

\begin{cor}
\label{Alex_cor}
For a positive braid link, more than one-quarter of the zeroes of the Alexander polynomial lie on 
the unit circle. 
\end{cor}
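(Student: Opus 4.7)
The plan is to deduce the corollary from Theorem~\ref{quarter_thm} by combining it with two classical results: one identifying the total number of roots of the Alexander polynomial of a positive braid closure with~$b_1(L)$, and one bounding the signature from above by the number of roots of the Alexander polynomial that lie on the unit circle. The unlink case contributes no roots and the statement is vacuous there, so attention may be restricted to the non-trivial positive braid closures covered by Theorem~\ref{quarter_thm}.

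For the first ingredient, I would recall that a positive braid closure~$L$ is fibered, with the Bennequin surface produced by Seifert's algorithm on the positive braid word serving as the fiber surface. This surface is of minimal genus, so its first Betti number equals~$b_1(L)$. For a fibered link, $\Delta_L(t)$ is, up to normalisation, the characteristic polynomial of the monodromy acting on the first homology of the fiber. Its degree therefore equals~$b_1(L)$, and hence~$\Delta_L$ has exactly~$b_1(L)$ roots counted with multiplicity.

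For the second ingredient, I would invoke the inequality from~\cite{GilmLiv, Lie16} that~$\sigma(L)$ is at most the number of roots of~$\Delta_L$ on the unit circle, counted with multiplicity. Combining this with Theorem~\ref{quarter_thm} yields that the number of unit-circle roots is at least~$b_1(L)/4 + 1/2$, which strictly exceeds~$b_1(L)/4$. Dividing by the total root count~$b_1(L)$ then gives the strict inequality claimed. The only care needed lies in identifying the degree of~$\Delta_L$ with~$b_1(L)$, which rests on both the fiberedness of positive braid closures and the minimal-genus property of the Bennequin surface; no serious obstacle arises.
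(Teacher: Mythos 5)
Your argument is correct and coincides with the one the paper intends: the corollary is deduced from Theorem~\ref{quarter_thm} together with the bound from~\cite{GilmLiv, Lie16} on unit-circle zeroes by the signature, and the identification of the total number of zeroes of~$\Delta_L$ with~$b_1(L)$ via fiberedness of positive braid closures. The paper leaves this deduction implicit in the introduction, and your write-up supplies exactly the missing details (including the degree count for the fibered case), so nothing further is needed.
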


\emph{Organisation}. In Section~\ref{s: background}, we introduce the necessary background
on positive braids and their diagram combinatorics, as well as chessboard surfaces and the 
Goeritz form. In Section~\ref{s: tridiagonal}, we study the signature of specific integer matrices 
that we call trisum matrices. These matrices appear again as the 
matrices of the Goeritz form restricted to certain subspaces in Section~\ref{s: proof}, where we 
give a proof of Theorem~\ref{quarter_thm}. Finally, in Section~\ref{s: discussion}, we discuss 
limitations of our approach. 
\smallskip

\emph{Acknowledgements}. 
We thank the anonymous referees for their comments on a first version of this article. 
JEG was supported by the National Science Foundation under Award No. DMS-2304856 and 
by a Simons Fellowship.

%%%%%%%%%%%%%%%%%%%%%%%%%%%%%%%%%%%%%%%%%%%%%%%%
%%%%%%%%%%%%%%%%%%%%%%%%%%%%%%%%%%%%%%%%%%%%%%%%

\section{Background}
\label{s: background}

\subsection{Diagram combinatorics}
\label{ss diagrams}
Let~$\beta$ be a~\emph{positive braid} on~$n+1$ strands, that is, a product of positive powers 
of the standard braid generators~$\sigma_1, \dots, \sigma_n$ of the braid group~$B_{n+1}$.
The \emph{standard link diagram}~$D$ for the closure~$L$ of~$\beta$ contains a positive 
crossing for every occurrence of a braid generator~$\sigma_i$ in~$\beta$. The \emph{index} of 
a crossing is the index of the standard braid generator corresponding to it. An example of a 
standard diagram is shown in Figure~\ref{braid_example}. 
\smallskip

\begin{figure}[h]
\def\svgwidth{340pt}
%% Creator: Inkscape 1.1.1 (c3084ef, 2021-09-22), www.inkscape.org
%% PDF/EPS/PS + LaTeX output extension by Johan Engelen, 2010
%% Accompanies image file '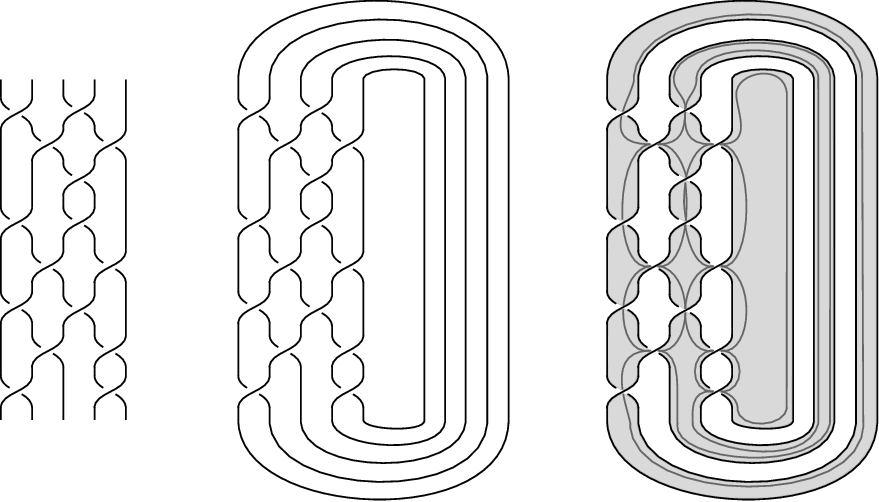' (pdf, eps, ps)
%%
%% To include the image in your LaTeX document, write
%%   \input{<filename>.pdf_tex}
%%  instead of
%%   \includegraphics{<filename>.pdf}
%% To scale the image, write
%%   \def\svgwidth{<desired width>}
%%   \input{<filename>.pdf_tex}
%%  instead of
%%   \includegraphics[width=<desired width>]{<filename>.pdf}
%%
%% Images with a different path to the parent latex file can
%% be accessed with the `import' package (which may need to be
%% installed) using
%%   \usepackage{import}
%% in the preamble, and then including the image with
%%   \import{<path to file>}{<filename>.pdf_tex}
%% Alternatively, one can specify
%%   \graphicspath{{<path to file>/}}
%% 
%% For more information, please see info/svg-inkscape on CTAN:
%%   http://tug.ctan.org/tex-archive/info/svg-inkscape
%%
\begingroup%
  \makeatletter%
  \providecommand\color[2][]{%
    \errmessage{(Inkscape) Color is used for the text in Inkscape, but the package 'color.sty' is not loaded}%
    \renewcommand\color[2][]{}%
  }%
  \providecommand\transparent[1]{%
    \errmessage{(Inkscape) Transparency is used (non-zero) for the text in Inkscape, but the package 'transparent.sty' is not loaded}%
    \renewcommand\transparent[1]{}%
  }%
  \providecommand\rotatebox[2]{#2}%
  \newcommand*\fsize{\dimexpr\f@size pt\relax}%
  \newcommand*\lineheight[1]{\fontsize{\fsize}{#1\fsize}\selectfont}%
  \ifx\svgwidth\undefined%
    \setlength{\unitlength}{421.49384264bp}%
    \ifx\svgscale\undefined%
      \relax%
    \else%
      \setlength{\unitlength}{\unitlength * \real{\svgscale}}%
    \fi%
  \else%
    \setlength{\unitlength}{\svgwidth}%
  \fi%
  \global\let\svgwidth\undefined%
  \global\let\svgscale\undefined%
  \makeatother%
  \begin{picture}(1,0.5698742)%
    \lineheight{1}%
    \setlength\tabcolsep{0pt}%
    \put(0,0){\includegraphics[width=\unitlength]{example.eps}}%
    \put(0.00984995,0.06281152){\color[rgb]{0,0,0}\makebox(0,0)[lt]{\lineheight{0}\smash{\begin{tabular}[t]{l}$\sigma_1$\end{tabular}}}}%
    \put(0.04187889,0.06281152){\color[rgb]{0,0,0}\makebox(0,0)[lt]{\lineheight{0}\smash{\begin{tabular}[t]{l}$\sigma_2$\end{tabular}}}}%
    \put(0.0774666,0.06281152){\color[rgb]{0,0,0}\makebox(0,0)[lt]{\lineheight{0}\smash{\begin{tabular}[t]{l}$\sigma_3$\end{tabular}}}}%
    \put(0.1130543,0.06281152){\color[rgb]{0,0,0}\makebox(0,0)[lt]{\lineheight{0}\smash{\begin{tabular}[t]{l}$\sigma_4$\end{tabular}}}}%
    \put(0.85487648,0.28547075){\color[rgb]{0,0,0}\makebox(0,0)[lt]{\lineheight{0}\smash{\begin{tabular}[t]{l}$F$\end{tabular}}}}%
  \end{picture}%
\endgroup%

\caption{The geometric realization of the positive braid $\sigma_1\sigma_4^2\sigma_2\sigma_1
\sigma_3\sigma_2\sigma_4\sigma_1\sigma_3^2\sigma_2\sigma_4\sigma_1\sigma_3$, 
its standard diagram and a chessboard coloring thereof. On the right, the grey curves 
indicate a basis of the first homology of the black chessboard surface.}
\label{braid_example}
\end{figure}

Let~$\mathrm{cr}(D)$ be the number of crossings of~$D$. There are different types of faces 
in the diagram~$D$. For each crossing of the diagram there is a face that starts above it.
There are~$\mathrm{cr}(D)$ faces of this type. Furthermore, there is an unbounded face~$F'$ 
and a face~$F$ containing the braid axis; compare with Figure~\ref{braid_example}.  Among 
the~$\mathrm{cr}(D)$ faces that belong to the first category, let~$f_i$ be the number of faces 
with~$i$ sides, for~$i\ge2$. Locally, every crossing is met by four faces, so we have
\[
4\mathrm{cr}(D) = \sum_{i\ge2} if_i + s+s',
\]
where~$s$ and~$s'$ are the number of sides of~$F$ and~$F'$, respectively. 
\smallskip

Since 
\begin{align*}
\sum_{i\ge 2} f_i = \mathrm{cr}(D),
\end{align*}
subtracting~$4\mathrm{cr}(D)$ from both sides yields
\[
0 = \sum_{i\ge2} (i-4)f_i + s+s',
\]
and thus
\begin{align}
\label{constraint}
2f_2 + f_3 = s+ s' + \sum_{i\ge5}(i-4)f_i.
\end{align}

We will use another fact about nonsplit positive braid links~$L$ on~$n+1$ strands with standard 
diagram~$D$, namely
\begin{align}
\label{Betti}
b_1(L) = \mathrm{cr}(D)-n.
\end{align}
This is a consequence of Stallings's construction~\cite{Stallings} of a fibre surface (which must be 
genus-minimising) for nonsplit positive braid closures with first Betti number~$\mathrm{cr}(D)-n$.

\subsection{Chessboard surfaces}
Choose the chessboard colouring of the standard diagram~$D$ defined by the following property: 
the faces above odd index crossings are black, as in Figure~\ref{braid_example}. 
Let~$S_B$ be the (not necessarily orientable) surface defined by the black faces of this colouring, 
with~$\partial S_B = L$, and let~$S_W$ be the (not necessarily orientable) surface defined by the 
white faces of this colouring. 
We have
\begin{equation}
\label{e: homology sum}
\mathrm{dim}(H_1(S_B)) + \mathrm{dim}(H_1(S_W)) = \mathrm{cr}(D).
\end{equation}

\subsection{Goeritz form}
The Goeritz form~$G$ is a symmetric bilinear form on the first homology~$H_1(S)$ of a compact, 
not necessarily orientable surface~$S$ embedded in~$\mathbb{S}^3$; see Goeritz's article~\cite{Goeritz} 
or also Chapter 9 in Lickorish's book~\cite{Lickorish}. For two simple closed curves~$\gamma_1$ 
and~$\gamma_2$ in~$S$, the Goeritz form on the corresponding homology elements is defined 
as~$G(\gamma_1, \gamma_2) = \mathrm{lk}(\gamma_1,\gamma_2^\pm)$. Here,~$\mathrm{lk}$ 
denotes the linking number and~$\gamma_2^\pm$ denotes the two-sided push-off of~$\gamma_2$ 
along the normal direction to~$S$. In order to obtain positivity of the signature for positive links,
we (nonstandardly) define the linking number to count one-half of the negative crossings minus 
one-half of the positive crossings between links. 
\smallskip

If~$S$ is orientable, then by definition~$G$ equals the symmetrised Seifert form and thus only 
depends on ~$\partial S$. In particular,~$\sigma(G) = \sigma(\partial S) = \sigma(L)$, the signature 
invariant of the boundary link. If~$S$ is nonorientable, a correction term is necessary, leading to the 
formula~$\sigma(L) = \sigma(G) - \mu$ by Gordon and Litherland; see~\cite[Theorem 6]{GL} and 
the discussion following it, which extends the result to links. Here,~$\mu$ is a correction term counting the 
number of positive minus the number of negative crossings among 
all crossings of a diagram where any local orientation of the chessboard surface fails to induce the 
correct link orientation on the boundary. Our nonstandard definition of the linking number results in 
a factor~$-1$ for both~$\sigma(L)$ and~$\sigma(G)$ in the formula, so in our convention, the formula 
reads~$\sigma(L) = \sigma(G) + \mu$. Since~$\mu$ counts every crossing of a diagram for exactly 
one of the two chessboard surfaces, we sum both bounds to obtain the following result, which we use 
later on. 

\begin{thm}%[compare~\cite{GL}, Theorem $6''$]
\label{Gordon-Litherland}
 Let~$D$ be a positive diagram of a link~$L$, and let~$S_B$ and~$S_W$ be the two associated 
 chessboard surfaces with Goeritz forms~$G_B$ and~$G_W$, respectively. Then 
 \[
 2\sigma(L) = \sigma(G_B) + \sigma(G_W) + \mathrm{cr}(D).
\]
\end{thm}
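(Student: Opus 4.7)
The plan is to apply the Gordon--Litherland formula separately to each of the two chessboard surfaces and then add the two identities, using the positivity of~$D$ together with the combinatorial relationship between the two surfaces to collapse the correction terms into~$\mathrm{cr}(D)$.

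First I would record the single--surface version of the theorem in the convention adopted in the excerpt: for each chessboard surface~$S_{\bullet}\in\{S_B,S_W\}$ with Goeritz form~$G_{\bullet}$, one has
\[
\sigma(L) \;=\; \sigma(G_{\bullet}) + \mu_{\bullet},
\]
where $\mu_{\bullet}$ counts positive minus negative crossings over the ``type II'' crossings of~$D$ with respect to~$S_{\bullet}$, i.e.\ those crossings at which no local orientation of $S_{\bullet}$ induces the given link orientation on~$\partial S_{\bullet}$. This is precisely the formula quoted from~\cite{GL}, already recalibrated to account for the sign convention on linking numbers used here.

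Next I would exploit the chessboard structure: at every crossing of the diagram, exactly one of the two chessboard surfaces is locally a saddle whose boundary orientation is compatible with the link orientation, and the other is incompatible. Consequently every crossing of~$D$ is ``bad'' for precisely one of~$S_B$ and~$S_W$, so the set of crossings contributing to~$\mu_B$ and the set contributing to~$\mu_W$ partition the crossings of~$D$. Because~$D$ is a positive diagram, each of those crossings contributes $+1$, so $\mu_B + \mu_W = \mathrm{cr}(D)$.

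Finally, I would add the two single--surface Gordon--Litherland identities to obtain
\[
2\sigma(L) \;=\; \sigma(G_B) + \sigma(G_W) + \mu_B + \mu_W \;=\; \sigma(G_B) + \sigma(G_W) + \mathrm{cr}(D),
\]
which is the desired equality. The only conceptually delicate step is verifying the partition property of the ``bad'' crossings; everything else is assembly. This verification is local and standard (it amounts to checking a single picture of a crossing with its two chessboard shadings), so I do not expect it to be a genuine obstacle, but it is the step whose care determines whether the sign bookkeeping closes up correctly in the adopted nonstandard convention.
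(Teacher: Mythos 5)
Your proposal matches the paper's argument exactly: the paper likewise cites the single-surface Gordon--Litherland formula $\sigma(L)=\sigma(G_\ast)+\mu_\ast$ (in the adjusted sign convention), observes that every crossing of the positive diagram is a ``type II'' crossing for exactly one of the two chessboard surfaces so that $\mu_B+\mu_W=\mathrm{cr}(D)$, and adds the two identities. This is correct and is essentially the same proof.
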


In order to use Theorem~\ref{Gordon-Litherland}, we have to consider the Goeritz forms~$G_B$ 
and~$G_W$ of the chessboard sufaces~$S_B$ and~$S_W$, respectively. For the respective first 
homologies, we pick a basis that consists of curves~$\gamma$ winding around white and black faces 
in the counterclockwise sense: see Figure~\ref{braid_example}.

\begin{lem}
\label{Goer_calc}
Let~$\beta$ be a positive braid on~$n+1$ strands that uses each generator~$\sigma_i$ at least twice, 
and let~$D$ be the standard diagram for its closure~$L$. Let~$G_B$ and~$G_W$ be the Goeritz forms 
of the chessboard sufaces~$S_B$ and~$S_W$, respectively. We have the following:
\begin{enumerate}
\item[(i)] 
if~$\gamma$ winds around an $n$-sided face that is not~$F$ or~$F'$,~$G_\ast(\gamma,\gamma) = 4-n$;
\item[(ii)] 
if~$\gamma_1$ and~$\gamma_2$ wind around adjacent faces of the same index whose boundaries meet 
in one crossing,~$G_\ast(\gamma_1, \gamma_2) = -1$;
\item[(iii)] 
if~$\gamma_1$ and~$\gamma_2$ wind around faces with no common crossing in their 
boundaries,~$G_\ast(\gamma_1, \gamma_2) = 0$;
\item[(iv)] 
if~$\gamma_1$ and~$\gamma_2$ wind around faces whose indices are apart by two and whose boundaries 
meet in one common crossing,~$G_\ast(\gamma_1, \gamma_2) = 1$.
\end{enumerate}
\end{lem}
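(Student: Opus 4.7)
The plan is to verify the four identities by direct computation from the definition $G_*(\gamma_1, \gamma_2) = \mathrm{lk}(\gamma_1, \gamma_2^\pm)$, remembering that $\mathrm{lk}$ in our convention counts one-half of the negative minus the positive crossings. Case (iii) is immediate: the underlying loops $\gamma_1$ and $\gamma_2$ lie in disjoint regions of the diagram, and the same holds after taking the two-sided push-off $\gamma_2^\pm$, so the projections are disjoint and the linking number vanishes.

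For cases (ii) and (iv), I would localize at the unique shared crossing $c$, since away from $c$ the two curves remain disjoint in projection. Near $c$, the chessboard surface $S_*$ contains a half-twisted saddle joining the two same-colored faces at opposite corners of $c$, and $\gamma_1, \gamma_2$ approach $c$ along two of those corners. Tracking $\gamma_2^\pm$ through the saddle and computing its intersections with $\gamma_1$ in a small neighbourhood of $c$ produces exactly two crossings of equal sign. Whether that common sign is positive or negative is dictated by whether the saddle runs in the above/below or left/right direction at $c$; equivalently, by whether $\gamma_1, \gamma_2$ encircle faces at the same level (case (ii)) or at levels differing by two (case (iv)). Applying our linking convention yields $-1$ in case (ii) and $+1$ in case (iv).

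Case (i) then follows from a row-sum argument. The sum of all same-colored face loops equals $\partial S_* = L$ as a $1$-chain in $S_*$, which is null-homologous in $S_*$; consequently
\[
G_*(\gamma, \gamma) = -\sum_{f' \neq f} G_*(\gamma, \gamma_{f'}),
\]
where $f'$ ranges over all other same-colored faces, including $F$ or $F'$ if they are the same colour as $f$. For an $n$-sided face $f$ of level $i$ distinct from $F$ and $F'$, each boundary crossing of $f$ contributes exactly one nonzero term to this sum, of value prescribed by (ii) or (iv). The hypothesis that $\sigma_i$ is used at least twice guarantees that exactly two of the $n$ boundary crossings of $f$ lie at index $i$ (the top and the bottom of $f$), contributing $-1$ each, while the remaining $n - 2$ lie at indices $i \pm 1$ and contribute $+1$ each. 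Summing yields $G_*(\gamma, \gamma) = -\bigl(2(-1) + (n-2)(+1)\bigr) = 4 - n$.

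The main obstacle will be pinning down the signs in cases (ii) and (iv). Because the nonstandard linking convention flips signs relative to the customary Goeritz tables, and because the two crossings produced by $\gamma_2^\pm$ at $c$ must be verified to carry a common sign of the correct value, one has to push through the half-twisted saddle model with some care. A subsidiary check confirms that the above/below versus left/right distinction at $c$ corresponds exactly to the same-level versus level-differ-by-two distinction between the two adjacent faces, so that the two cases are cleanly separated.
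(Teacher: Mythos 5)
Your overall strategy --- direct verification from the definition, which is all the paper does (its proof is a one-line ``check directly, or compare with Lickorish'') --- is the right one, and your treatment of (ii)--(iv) by localizing at the shared half-twisted band is exactly the intended computation, modulo the sign bookkeeping you yourself flag. The one genuine problem is your justification of the row-sum identity in (i). You assert that the sum of the face loops ``equals $\partial S_* = L$ as a $1$-chain in $S_*$, which is null-homologous in $S_*$''. Both halves of this are wrong as stated. First, the basis curves for $H_1(S_B)$ are the loops winding around the \emph{white} faces (and vice versa), and the sum of all loops encircling the white complementary regions is not homologous to $\partial S_B$. Second, and more seriously, $[\partial S_*]=[L]$ is in general \emph{not} null-homologous in $S_*$: the chessboard surfaces here are typically nonorientable (that is the entire point of the Gordon--Litherland correction term $\mu$), and already for a M\"obius band the boundary class is twice the core, hence nonzero. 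So the homological input you invoke fails in precisely the cases of interest.

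The row-sum identity $G_*(\gamma_f,\gamma_f) = -\sum_{f'\neq f} G_*(\gamma_f,\gamma_{f'})$ is nevertheless true, for a different reason: the classes $[\gamma_{f'}]$, summed over \emph{all} faces of the colour opposite to $S_*$, vanish in $H_1(S_*)$. This is the single relation among the standard generators, and it holds because the corresponding relation already holds in the planar (orientable, untwisted) black or white region, whose homotopy type is unchanged by inserting the half-twisted bands. With that repair, your count for an $n$-sided face --- two index-$i$ crossings contributing $-1$ and $n-2$ crossings of index $i\pm1$ contributing $+1$, with the caveat that a neighbour sharing several crossings contributes additively even though (ii) and (iv) are stated only for a single shared crossing --- does give $4-n$. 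Alternatively, and closer to what the paper intends, (i) can be checked directly by counting the intersections of $\gamma$ with its two-sided push-off through each of the $n$ half-twisted bands along $\partial f$, which avoids the relation altogether.
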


\begin{proof}
All four facts can be checked directly using the definition. Alternatively, one can use Lickorish's 
combinatorial description of the coefficients of a Goeritz matrix in Chapter 9 of~\cite{Lickorish}. 
Comparing with Lickorish's description, one should keep in mind that our nonstandard definition 
of the linking number results in a sign change for each coefficient. 
\end{proof}

\begin{rem}
\label{Goeritz_offdiag_remark}\emph{
We later change the orientation of certain curves~$\gamma_i$ in order to get particularly simple, 
nonnegative Goeritz matrices. 
We note that~(ii) in Lemma~\ref{Goer_calc} holds if the curves~$\gamma_1$ and~$\gamma_2$ 
are both oriented counterclockwise or if both are oriented 
clockwise. Otherwise,~$G_\ast(\gamma_1, \gamma_2) = +1$. Similarly,~(iv) in Lemma~\ref{Goer_calc} 
holds if both curves are oriented counterclockwise 
or if both are oriented clockwise.
}\end{rem}

%%%%%%%%%%%%%%%%%%%%%%%%%%%%%%%%%%%%%%%%%%%%%%%%
%%%%%%%%%%%%%%%%%%%%%%%%%%%%%%%%%%%%%%%%%%%%%%%%

\section{A signature bound for tridiagonal and trisum matrices}
\label{s: tridiagonal}

In this section, we discuss signature bounds for certain tridiagonal matrices. Further, we introduce and study 
a generalisation of them which we call trisum matrices. These matrices and their direct sums appear as the 
Gram matrices we study in the next section, when we restrict the Goeritz forms of the chessboard 
surfaces to appropriate subspaces.

\subsection{Tridiagonal matrices.}
Let~$T(d_1,\dots,d_n)$ denote the tridiagonal matrix with diagonal entries~$d_1,\dots,d_n \in \mathbb{Z}$ 
and with 1s on the secondary diagonals. We use the shorthand~$d^{\underline a}$ to denote a 
string of~$a$ copies of~$d$, where~$a,d \in \mathbb{Z}$,~$a \ge 0$.
\smallskip

The first lemma is easy and well-known, and we supply one short proof related to those that follow.

\begin{lem}
\label{pathsignature_lemma}
If~$\epsilon = \pm1$ and~$M = T((\epsilon \cdot 2)^{\underline a})$,
 then~$\sigma(M)= \epsilon \cdot \mathrm{dim}(M)$.
\end{lem}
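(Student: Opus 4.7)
The plan is to handle the case $\epsilon = +1$ directly and reduce $\epsilon = -1$ to it by negation. Set $M = T(2^{\underline a})$ and aim to show $M$ is positive definite, which implies $\sigma(M) = \dim(M) = a$; then $T((-2)^{\underline a}) = -M$ is negative definite with signature $-a$.

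To show positive definiteness, I would use Sylvester's criterion: it suffices to verify that every leading principal minor is positive. Let $D_k$ denote the determinant of the upper-left $k \times k$ block of $M$, which is itself $T(2^{\underline k})$. Expanding along the last row (or column), I get the three-term recurrence
\[
D_k = 2 D_{k-1} - D_{k-2}, \qquad D_0 = 1, \quad D_1 = 2.
\]
Solving this linear recurrence gives $D_k = k+1 > 0$ for all $k \ge 0$, so Sylvester's criterion applies and $M \succ 0$.

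I expect no real obstacle here; the lemma is essentially the observation that $T(2^{\underline a})$ is the classical $A_a$ Cartan-type matrix (up to a sign convention on the off-diagonals), which is well-known to be positive definite with eigenvalues $2+2\cos(k\pi/(a+1))>0$. The reason to spell out the determinant recurrence approach, rather than cite the eigenvalue formula, is that the same technique of computing leading principal minors via a short recurrence is likely what will be adapted in the subsequent lemmas for trisum matrices, where the block structure is more intricate but the underlying inductive bookkeeping is the same.
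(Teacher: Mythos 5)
Your proposal is correct and is essentially the paper's argument in a different packaging: the paper diagonalizes $T((\epsilon\cdot 2)^{\underline a})$ by Gram--Schmidt, obtaining diagonal entries $\epsilon\cdot\tfrac{2}{1},\epsilon\cdot\tfrac{3}{2},\dots,\epsilon\cdot\tfrac{a+1}{a}$, which are precisely the ratios $D_k/D_{k-1}$ of your leading principal minors, and concludes definiteness the same way. The only minor caveat is your closing prediction: the later trisum arguments in the paper reuse the explicit Gram--Schmidt congruence (e.g.\ for $T(2^{\underline a},1,2^{\underline b})$) rather than a determinant recurrence, but this does not affect the correctness of your proof of the lemma.
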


\begin{proof}
The Gram-Schmidt algorithm shows that~$M$ is congruent over~$\mathbb{Q}$ to the diagonal matrix 
with entries~$\epsilon \cdot \frac{2}{1}$, $\epsilon \cdot \frac{3}{2}$, $\dots$, $\epsilon \cdot \frac{a+1}{a}$.
It follows that~$M$ is definite with sign~$\epsilon$, so~$\sigma(M) = \epsilon \cdot \dim(M)$.
\end{proof}

The following result holds for general integer coefficients on the diagonal. Let~$\underline{tr}(M)$ 
denote the sum of the negative entries on the diagonal of the matrix~$M$.

\begin{prop}
\label{diagonal_sum_prop}
If~$M = T(d_1,\dots,d_n)$, then
\[
\sigma(M) \ge -\frac{1}{2} + \frac{1}{2} \underline{tr}(M).
\]
\end{prop}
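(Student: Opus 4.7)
The plan is to induct on the dimension $n$ of $M$, with base cases $n \le 1$ immediate from direct computation. For the inductive step I will do a case analysis on the first diagonal entry $d_1$, reducing $M$ to a smaller tridiagonal matrix in one of two ways: either by Gaussian elimination at $d_1$ (when $d_1 \ne 0$), giving
\[
M \cong_{\mathbb{Q}} (d_1) \oplus T(d_2 - 1/d_1,\, d_3, \ldots, d_n),
\]
or by a Schur complement of the leading $2 \times 2$ block $A$ (the submatrix on rows/columns $\{1,2\}$), giving $M \cong_{\mathbb{Q}} A \oplus T'$ for some tridiagonal $T'$ of size $n-2$. In each case the tridiagonal summand has first diagonal entry within $1$ of a nearby integer; replacing that entry by the smaller integer subtracts a positive semidefinite perturbation, which does not raise the signature, and so bounds the signature below by that of an integer tridiagonal matrix to which the inductive hypothesis applies. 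Tracking how $\underline{tr}$ transforms under the reduction then closes the step.

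If $d_1 \ge 1$ or $d_1 \le -2$, the pivot reduction works with margin at least $\tfrac{1}{2}$ over what is needed (in particular for $d_1 \le -2$ the surplus $|d_1|-1 \ge 1$ compensates the $-1$ eigenvalue absorbed by the pivot). If $d_1 = 0$ or ($d_1 = -1$ and $d_2 \ge 0$), then $\det A \le -1$, hence $\sigma(A) = 0$, and the Schur complement reduction closes the case cleanly.

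The main obstacle is the case $d_1 = -1$, $d_2 \le -1$, where $\sigma(A) = -1$ (so the Schur complement approach breaks down) and the pivot reduction has no apparent margin. The key observation is that $1/d_1 = -1$ is itself an integer, so the pivot reduction is in fact \emph{clean}: $M \cong_{\mathbb{Q}} (-1) \oplus T(d_2 + 1,\, d_3, \ldots, d_n)$ has only integer entries. Because $d_2 \le -1$ forces $d_2 + 1 \le 0$, a short computation gives $\underline{tr}(T(d_2 + 1, d_3, \ldots, d_n)) = \underline{tr}(M) + 2$, which exactly offsets the $-1$ absorbed by the pivot together with the $-\tfrac{1}{2}$ slack built into the inductive hypothesis, yielding the asserted inequality.
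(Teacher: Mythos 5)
Your proof is correct and follows essentially the same strategy as the paper's: induction on $n$ with a case split on $d_1$, using the pivot congruence $M \cong_{\mathbb{Q}} (d_1)\oplus T(d_2-1/d_1,d_3,\dots,d_n)$, rounding the perturbed entry down to restore integrality, and tracking $\underline{tr}$. The cases $d_1\ge 1$, $d_1\le -2$, $d_1=0$, and $d_1=-1$ with $d_2\le -1$ match the paper's Cases 1, 4, 2, and (the first half of) 3 almost verbatim. The one genuine divergence is the subcase $d_1=-1$, $d_2\ge 0$: the paper performs the clean pivot anyway, accepts that $\underline{tr}$ improves by only $1$, and recovers the missing $\tfrac12$ by observing that the resulting matrix necessarily lands in Case 1 at the next step (a two-step compensation); you instead eliminate the leading $2\times 2$ block $A$ in one stroke, using $\det A=-d_2-1\le -1$ to get $\sigma(A)=0$ and a Schur complement $T(d_3-\tfrac{1}{d_2+1},d_4,\dots,d_n)$ whose rounded-down version still satisfies $\underline{tr}\ge\underline{tr}(M)$. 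Your variant keeps the induction strictly one-step and avoids the lookahead, at the cost of a slightly more involved $2\times 2$ elimination; both close the same gap and the bookkeeping checks out.
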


\begin{rem}
\label{diagonal_sum_remark}
\emph{
\begin{enumerate}
\item
If~$d_1,\dots,d_n \ge 0$, then~$\sigma(M) \ge 0$: for lowering diagonal entries does not raise the signature, 
and $\sigma(M)=0$ when all $d_i$ equal zero.
\item 
The matrix~$M = T(-1,-2,\dots,-2)$ is negative definite and attains the bound in Proposition~\ref{diagonal_sum_prop}.
\item
The bound in Proposition~\ref{diagonal_sum_prop} does not depend on the number or sum of positive 
diagonal coefficients. For example, if~$M = T(0,N,0,N,\dots,0,N,-1)$ with~$N$ any positive integer, 
then~$\sigma(M) = -1$, which coincides with the bound in Proposition~\ref{diagonal_sum_prop}.
\item 
Since~$\sigma(-M) = - \sigma(M)$, Proposition~\ref{diagonal_sum_prop} also implies the upper bound
\[
\sigma(M) \le \frac{1}{2} - \frac{1}{2} \underline{tr}(-M)= \frac{1}{2} + \overline{tr}(M),
\]
where~$\overline{tr}(M)$ denotes the sum of the positive diagonal coefficients of the matrix $M$. 
We shall not make use of this bound.
\end{enumerate}
}
\end{rem}

\begin{proof}[Proof of Proposition~\ref{diagonal_sum_prop}]
We proceed by induction on the dimension~$n$ of the matrix~$M$.
In the one-dimensional case~$n=1$, the statement clearly holds. We note that the 
summand~$-\frac{1}{2}$  is necessary precisely for the matrix~$(-1)$.
\smallskip

We need a second base case~$n=2$ in case the first diagonal coefficient is~$d_1=0$ or if the 
first diagonal coefficient is~$d_1 =-1$ and the second is~$d_2\ge 0$. In the first case,~$\sigma(M) = 0$, 
so the bound is satisfied. In the second case~$\sigma(M)=0$ and the bound is satisfied, even improved 
by a summand~$+\frac{1}{2}$.
\smallskip

We now assume that~$n\ge2$, and~$n\ge3$ in case~$d_1 =0$ or if~$d_1= -1$ and~$d_2 \ge 0$. 
We distinguish cases depending on the first diagonal coefficient~$d_1$. 
\smallskip

\emph{Case 1: $d_1 \ge 1$.}
The signature of the matrix
\begin{gather*}
  M =\ \begin{blockarray}{crrrrc}
    \begin{block}{(c|rrrrc)}
      d_1 & 1 & \ & \ & \ \\ \BAhhline{-------------~}
      1 & d_2 & 1 & \ & \ \\ 
      \ & 1 & \ddots &   & \\  
      \ & \ &  &   &  \\
    \end{block}
  \end{blockarray}\ 
\sim\ 
  \begin{blockarray}{crrrrc}
    \begin{block}{(c|rrrrc)}
      d_1 &  & \ & \ & \ \\ \BAhhline{-------------~}
       & d_2-\frac{1}{d_1} & 1 & \ & \ \\ 
      \ & 1 & \ddots &   & \\  
      \ & \ &  &   &  \\
    \end{block}
  \end{blockarray}\ 
\end{gather*}
is bounded from below by~$1+\sigma(M')$, where~$M'$ is the matrix obtained from~$M$ by deleting 
the first row and the first column and reducing the diagonal coefficient~$d_2$ by 1. 
We have~$\underline{tr}(M') \ge \underline{tr}(M) - 1$. By induction hypothesis, 
the statement is true for~$M'$, and we obtain
\[
\sigma(M) \ge 1 + \sigma(M') \ge 1 -\frac{1}{2}+ \frac{1}{2} \underline{tr}(M') \ge \frac{1}{2} \underline{tr}(M).
\]
We see that an occurrence of this case actually improves the signature bound by at least~$\frac{1}{2}$, 
and the signature bound is even improved by~$1$ if~$d_2 > 0$.
\smallskip

\emph{Case 2: $d_1 = 0$.}
The signature of the matrix
\begin{gather*}
  M =\ \begin{blockarray}{crrrrc}
    \begin{block}{(cr|rrrc)}
      0 & 1 & 0 & 0 & \cdots\\
      1 & d_2 & 1 & {0} & \cdots\\ \BAhhline{-------------~}
      0 & 1 &  &   & \\  
      0 & {0} &  & M'  &  \\
      \vdots & \vdots &  &  &   \\
    \end{block}
  \end{blockarray}\
\sim\
  \begin{blockarray}{crrrrc}
    \begin{block}{(cr|rrrc)}
      0 & 1 & 0 & 0 & \cdots\\
      1 & d_2 & 0 & {0} & \cdots\\ \BAhhline{-------------~}
      0 & 0 &  &   & \\  
      0 & {0} &  & M'  &  \\
      \vdots & \vdots &  &  &   \\
    \end{block}
  \end{blockarray}\
\end{gather*}
equals the signature of the matrix~$M'$. Since $\underline{tr}(M') \ge \underline{tr}(M)$, we obtain the 
desired bound for~$\sigma(M)$ by induction hypothesis. 
\smallskip

\emph{Case 3: $d_1 = -1$.}
The signature of the matrix
\begin{gather*}
  M =\ \begin{blockarray}{crrrrc}
    \begin{block}{(c|rrrrc)}
      -1 & 1 & \ & \ & \ \\ \BAhhline{-------------~}
      1 & d_2 & 1 & \ & \ \\ 
      \ & 1 & \ddots &   & \\  
      \ & \ &  &   &  \\
    \end{block}
  \end{blockarray}\ 
\sim\ 
  \begin{blockarray}{crrrrc}
    \begin{block}{(c|rrrrc)}
      -1 &  & \ & \ & \ \\ \BAhhline{-------------~}
       & d_2+1 & 1 & \ & \ \\ 
      \ & 1 & \ddots &   & \\  
      \ & \ &  &   &  \\
    \end{block}
  \end{blockarray}\ 
\end{gather*}
equals~$-1+\sigma(M')$, where~$M'$ is the matrix obtained from~$M$ by deleting the first row and the 
first column and replacing the diagonal coefficient~$d_2$ by~$d_2+1$. We distinguish two cases. 
If~$d_2<0$, then~$\underline{tr}(M') = \underline{tr}(M) + 2$. In particular, the desired lower bound 
for~$\sigma(M)$ follows from the induction hypothesis on~$M'$. If~$d_2\ge 0$, 
then~$\underline{tr}(M') = \underline{tr}(M) + 1$. This is not enough to prove the desired inequality 
by the induction hypothesis on~$M'$. However, we notice that after such a case, the matrix~$M'$ is 
as in Case~1 of our case distinctions, which in turn improves the signature bound for~$M'$ by at 
least~$\frac{1}{2}$. This exactly compensates for the loss in the present case and the desired bound is 
attained after two steps. 
\smallskip

\emph{Case 4: $d_1 \le -2$.}
The signature of the matrix
\begin{gather*}
  M =\ \begin{blockarray}{crrrrc}
    \begin{block}{(c|rrrrc)}
      d_1 & 1 & \ & \ & \ \\ \BAhhline{-------------~}
      1 & d_2 & 1 & \ & \ \\ 
      \ & 1 & \ddots &   & \\  
      \ & \ &  &   &  \\
    \end{block}
  \end{blockarray}\ 
\sim\ 
  \begin{blockarray}{crrrrc}
    \begin{block}{(c|rrrrc)}
      d_1 &  & \ & \ & \ \\ \BAhhline{-------------~}
       & d_2-\frac{1}{d_1} & 1 & \ & \ \\ 
      \ & 1 & \ddots &   & \\  
      \ & \ &  &   &  \\
    \end{block}
  \end{blockarray}\ 
\end{gather*}
is bounded from below by~$-1+\sigma(M')$, where~$M'$ is the matrix obtained from~$M$ 
by deleting the first row and the first column. We have~$\underline{tr}(M') \ge  \underline{tr}(M)+2.$ 
Again, the desired inequality follows from the induction hypothesis on~$M'$.
\end{proof}

\subsection{Trisum matrices.}
We will need an extension of the lower bound of Proposition~\ref{diagonal_sum_prop} to a slightly wider class 
of matrices. 
\smallskip

Let~$N = T(d_1,\dots, d_r)$ and let~$C_i = T(2^{\underline a_i},1,2^{\underline b_i})$,~$i=1,\dots,k$. 
Let~$\widetilde M$ be the direct sum of matrices~$\widetilde M=N\oplus C_1 \oplus \cdots \oplus C_k$. 
Furthermore, for each~$i=1,\dots,k$, let $1 \le g(i) \le r$ be an index with~$d_{g(i)} \le 0$, and let~$h(i)$ 
be the index of the column of~$\widetilde M$ that contains the diagonal coefficient~1 of the block~$C_i$.
Let~$M$ be the matrix obtained from~$\widetilde M$ by letting~$M_{g(i),h(i)} = M_{h(i),g(i)} = 1$ 
for each~$i=1,\dots,k$ and keeping all other entries the same. We call~$M$ a {\em trisum} matrix 
with {\em core}~$N$ and {\em blocks}~$C_i$. Let~$\underline{tr}(M)$ denote the sum of the negative 
diagonal coefficients of its core, and let~$b(M)$ denote the sum of the dimensions of its blocks.

\begin{prop}
\label{extended_bound_prop}
If~$M$ is a trisum matrix, then 
\[
\sigma(M) \ge -\frac{1}{2} + \frac12  \underline{tr}(M) + \frac12 b(M).
\]
\end{prop}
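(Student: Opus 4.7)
The plan is to prove Proposition~\ref{extended_bound_prop} by induction on the number $k$ of blocks, with base case $k=0$ given by Proposition~\ref{diagonal_sum_prop}. In the inductive step, I would isolate a single block $C_i$, reduce it via a block-internal change of basis to a single pendant vertex, and then eliminate the pendant to obtain a (possibly split) trisum matrix with fewer blocks.

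For the block reduction, let $U_i$ be the subspace spanned by the $a_i+b_i$ basis vectors at the ``$2$'' positions of $C_i$. By Lemma~\ref{pathsignature_lemma}, $M|_{U_i}=T(2^{\underline{a_i}})\oplus T(2^{\underline{b_i}})$ is positive definite of signature $a_i+b_i$. Let $w_i$ span the $M$-orthogonal complement of $U_i$ within $C_i$'s ambient subspace; solving the underlying tridiagonal systems explicitly gives
\[
M(w_i,w_i)=\mu_i:=\tfrac{1}{a_i+1}+\tfrac{1}{b_i+1}-1=\tfrac{1-a_ib_i}{(a_i+1)(b_i+1)}.
\]
Because the projection corrections defining $w_i$ are supported within $C_i$, the coupling $M(w_i,e_{g(i)})=1$ is retained, so $\sigma(M)=(a_i+b_i)+\sigma(M|_{U_i^\perp})$, where $M|_{U_i^\perp}$ matches $M$ except that $C_i$ has been replaced by the single pendant $w_i$ of diagonal $\mu_i$ still attached to $d_{g(i)}$.

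To eliminate $w_i$, I split into cases by the sign of $\mu_i$. If $\mu_i>0$ (forcing $a_i=0$ or $b_i=0$), then $1/\mu_i=\dim C_i$ is an integer, so a Schur complement step contributes $+1$ to the signature and shifts $d_{g(i)}\mapsto d_{g(i)}-\dim C_i$, yielding a trisum matrix with $k-1$ blocks and the bound attained with equality. If $\mu_i<0$, which forces $a_i+b_i\geq 3$, the Schur shift is a positive rational; to preserve integer coefficients I instead invoke the monotonicity of signature under diagonal decrease, keeping $d_{g(i)}$ unchanged at the cost of $-1$ in signature, which is absorbed by the slack $a_i+b_i\geq 3$. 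If $\mu_i=0$, forcing $a_i=b_i=1$, the $2\times 2$ submatrix on $\{e_{g(i)},w_i\}$ is a signature-$0$ nondegenerate $2$-plane, so a Witt decomposition splits the core at $g(i)$ into $N_L\oplus N_R$; any other block $C_l$ attached at $g(i)$ becomes a direct summand. I then apply the inductive hypothesis to $M_L\oplus M_R$ together with the elementary bound $\sigma(C_l)\geq\tfrac{1}{2}\dim C_l$ (verified case by case from $\sigma(C_l)=a_l+b_l+\mathrm{sign}(\mu_l)$), and observe that the extra $-\tfrac{1}{2}$ in the direct-sum inductive bound is absorbed by $-\tfrac{1}{2}d_{g(i)}\geq 0$.

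The main obstacle is the case $\mu_i=0$: the Schur complement fails, the Witt decomposition forces the core to split, and other blocks attached at $g(i)$ detach from their core attachment. Checking that the direct-sum inductive bound together with the dangling-block lemma $\sigma(C_l)\geq\tfrac{1}{2}\dim C_l$ still yields the required inequality is the main bookkeeping; a secondary subtlety, the preservation of integer diagonals throughout the induction, is handled uniformly by the monotonicity argument in the $\mu_i<0$ branch.
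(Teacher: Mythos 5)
Your proposal is correct and follows essentially the same route as the paper: induction on the number of blocks, a block-internal diagonalization reducing $C_i$ to a pendant of weight $\mu_i$, and a trichotomy on the sign of $\mu_i$ that matches the paper's cases $\min\{a,b\}=0$, $a=b=1$, and $ab>1$ (including the hyperbolic-plane split of the core at $g(i)$ with detached blocks handled via $\sigma(C_l)\ge\tfrac12\dim C_l$, and the absorption of the $-1$ loss by $a+b\ge 3$ in the indefinite case). The only differences are organizational, so there is nothing substantive to add.
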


\begin{proof}
We proceed by induction on the number~$k$ of blocks of~$M$. 
\smallskip

If~$k=0$, then the bound coincides with the bound from Proposition~\ref{diagonal_sum_prop}. 
\smallskip

Now suppose the bound holds for matrices with~$k$ blocks~$C_i$, and let~$M$ be a matrix 
with~$k+1$ blocks~$C_i$. We make a case distinction depending on the block~$C_{k+1}$. 
\smallskip

\emph{Case 1: $C_{k+1} = T(1).$} 
In this case,~$\sigma(M) = 1 + \sigma(M')$, where~$M'$ is obtained from~$M$ by deleting the 
last row and column and replacing the diagonal coefficient~$d_{g(k+1)}$ by~$d_{g(k+1)}-1$. 
Thus, $ \underline{tr}(M') =  \underline{tr}(M) -1$ and $b(M') = b(M)-1$. Applying the induction 
hypothesis for~$M'$, we get 
\begin{align*}
\sigma(M) = 1 + \sigma(M') &\ge  1 -\frac{1}{2} + \frac{1}{2}  \underline{tr}(M') + \frac{1}{2}b(M')\\
&= -\frac{1}{2} + \frac{1}{2}  \underline{tr}(M) + \frac{1}{2}b(M).
\end{align*}
\smallskip

\emph{Case 2: $C_{k+1} = T(2^{\underline a},1,2^{\underline b})$ for~$a=0$ or~$b=0$.} 
Note that the matrix~$T(2^{\underline a},1)$ is congruent to the diagonal matrix with diagonal 
coefficients~$\frac{2}{1},\frac{3}{2},\dots,\frac{a+1}{a},\frac{1}{a+1}$. 
In particular, we get that~$\sigma(M) = a+1 + \sigma(M')$, where~$M'$ is obtained from~$M$ by deleting the 
last~$a+1$ rows and columns and replacing the diagonal coefficient~$d_{g(k+1)}$ by~$d_{g(k+1)}-(a+1)$. 
As in Case~1, applying the induction hypothesis for~$M'$ gives the desired bound. 
\smallskip

\emph{Case 3: $C_{k+1} = T(2,1,2).$} 
The matrix~$T(2,1,2)$ is congruent to the diagonal matrix with diagonal coefficients~$2,0,2$. 
It follows that~$\sigma(M) = 2 + \sigma(M')$, where~$M'$ is obtained from~$M$ by deleting 
the last three rows and columns, but also the row and column of index~$g(k+1)$. 
In order to see this, first apply the base change to~$M$ that effectuates the congruence 
of the block~$T(2,1,2)$ to the diagonal matrix with diagonal coefficients~$2,0,2$.
The row and the column of index~$h(k+1)$ now have one nonzero coefficient, 
namely~$M_{g(k+1),h(k+1)} = M_{h(k+1),g(k+1)} = 1$. As a next change of base, 
the row/column of index~$h(k+1)$ can be subtracted as often as necessary 
from other rows/columns in order for the row and the column of index~$g(k+1)$ to have only
their diagonal coefficient nonzero, as well as the coefficients~$M_{g(k+1),h(k+1)} = M_{h(k+1),g(k+1)} = 1$. 
After this change of base, the last three indices together with index~$g(k+1)$ become a direct 
summand of the matrix~$M$, with signature~2. 

We remark that deleting the row and column of index~$g(k+1)$ makes the blocks~$C_i$ with~$g(i) = g(k+1)$ 
into direct summands of the matrix~$M'$. Therefore~$M'$ decomposes as~$M'_1 \oplus M_2' \oplus C$ 
where each~$M_j'$ has at most~$k$ of the blocks~$C_i$ and is of the form so that we can apply the induction 
hypothesis, and~$C$ consists of direct summands of the type~$C_i$. For a block of type~$C_i$, 
we have~$\sigma(C_i) \ge \frac{1}{2}\dim(C_i)$. Applying the induction hypothesis to~$M_j'$, we obtain
\begin{align*}
\sigma(M) &= 2 + \sigma(M') \\ 
&= 2 + \sigma(M'_1) + \sigma(M'_2) + \sigma(C) \\
&\ge 2 -\frac{1}{2} + \frac{1}{2}\underline{tr}(M_1')+ \frac{1}{2}b(M_1') -\frac{1}{2}+ \frac{1}{2}\underline{tr}(M_2')+\frac{1}{2}b(M_2') + \frac{1}{2}\dim(C) \\
&\ge 1 + \frac{1}{2} \underline{tr}(M) + \frac{1}{2}\left(b(M_1') + b(M_2') + \dim(C)\right) \\
&= -\frac{1}{2} + \frac{1}{2} \underline{tr}(M) + b(M), 
\end{align*}
where the first inequality uses the induction hypothesis for both~$M'_1$ and~$M'_2$, 
and the second inequality uses the fact that~$d_{g(k+1)}\le 0$, which in particular means 
that~$\underline{tr}(M_1') + \underline{tr}(M_2') \ge \underline{tr}(M)$.The final equality follows 
from~$\dim(C_{k+1}) = 3$.
\smallskip

\emph{Case 4: $C_{k+1} = T(2^{\underline a},1,2^{\underline b})$ for~$a\ge1$ and~$b\ge1$.} 
Case 3 covers the case that~$a=b=1$, so we can suppose that~$a>1$ or~$b>1$. 
The matrix~$T(2^{\underline a},1,2^{\underline b})$ is congruent to the diagonal matrix with 
coefficients~$\frac{2}{1},\dots,\frac{a+1}{a},x, \frac{b+1}{b},\dots,\frac{2}{1},$
where~$x= 1-\frac{a}{a+1}-\frac{b}{b+1}<0$. In particular, $\sigma(M) \ge a+b-1 + \sigma(M')$, 
where~$M'$ is obtained from~$M$ by deleting the last~$a+b+1$ rows and columns. 
Applying the induction hypothesis for~$M'$ yields the desired bound, since~$a+b\ge 3$
and therefore the improvement~$a+b-1$ is at least half as large as the deficit~$a+b+1$ for the 
bound we have when we apply the induction hypothesis to~$M'$.
\end{proof}

The congruences noted in the proof of Proposition \ref{extended_bound_prop} establish the following 
useful result.

\begin{porism}
\label{signature_porism}
If $M = T(2^{\underline a},1,2^{\underline b})$, then
\[
\sigma(M) =
\begin{cases}
\dim(M), &\mathrm{if} \min\{a,b\}=0, \\
\dim(M)-1, & \textup{if } a=b=1, \textup{ and}\\
\dim(M)-2, & \mathrm{otherwise}.
\end{cases}
\]
In particular, $\sigma(M) \ge \frac12 \dim(M)$. \qed
\end{porism}

%%%%%%%%%%%%%%%%%%%%%%%%%%%%%%%%%%%%%%%%%%%%%%%%
%%%%%%%%%%%%%%%%%%%%%%%%%%%%%%%%%%%%%%%%%%%%%%%%

\section{Proof of Theorem~\ref{quarter_thm}} 
\label{s: proof}

\subsection{Preliminaries.}
Let~$L$ be a positive braid link, and let~$\beta$ be a positive braid whose closure is~$L$ such that
\begin{enumerate}
\item[(a)]
$\beta$ has minimal index~$n \ge 1$ among all positive braids whose closure is~$L$ and
\item[(b)]
the sum of the indices of the generators appearing in~$\beta$ is minimal subject to (a).
\end{enumerate}

We quickly dispense with some trivial cases:

\begin{itemize}
\item
If~$n=1$, then~$L$ is the unknot, and~$\sigma(L) = b_1(L) = 0$.
\item
If~$n \ge 2$ and~$\beta$ does not use the generator~$\sigma_i$ for some~$1 \le i \le n-1$, 
then~$L$ is the split union~$L_1 \sqcup L_2$, where each of~$L_1$ and~$L_2$ is a positive braid link.
In this case,~$\sigma(L) = \sigma(L_1) + \sigma(L_2)$ and~$b_1(L) = b_1(L_1) + b_1(L_2)$.
\item
If~$n \ge 2$ and~$\beta$ uses a single generator~$\sigma_i$ for some~$1 \le i \le n-1$, then~$L$ is the 
connected sum~$L_1 \# L_2$, where each of~$L_1$ and~$L_2$ is a positive braid link. 
Once more, in this case, we have~$\sigma(L) = \sigma(L_1) + \sigma(L_2)$ 
and~$b_1(L) = b_1(L_1) + b_1(L_2)$.
\item
If~$n=2$ and~$\beta$ uses~$\sigma_1$ at least twice, then~$L$ is a~$(2,k)$-torus link,~$k \ge 2$, 
and~$k-1 = \sigma(L) = b_1(L) \ge \frac14 b_1(L) + \frac12$.
\end{itemize}
Thus, Theorem~\ref{quarter_thm} follows if we can prove it under the additional assumptions that
\begin{enumerate}
\item[(c)]
$n \ge 3$ and
\item[(d)]
$\beta$ uses each generator~$\sigma_i$ at least twice,
\end{enumerate}
which we assume in the remainder of the section. In particular, we have~$s+s' \ge 4$ in the notation of 
Section~\ref{ss diagrams}, so Equation~(\ref{constraint}) entails
\begin{align}
\label{constraint2}
2f_2 + f_3 \ge 4 + \sum_{i\ge5}(i-4)f_i.
\end{align} 

\begin{lem}
\label{face_lemma}
In every column of $\beta$ except possibly the first, there exists a face with at least four sides.
\end{lem}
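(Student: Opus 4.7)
The plan is to argue by contradiction. Fix $i \ge 2$ and assume that every face in column $i$ of the standard diagram has at most three sides. List the occurrences of $\sigma_i$ in $\beta$ cyclically as times $t_1,\dots,t_p$. The face that starts above the $\sigma_i$-crossing at time $t_j$ is bounded by the two $\sigma_i$-crossings at $t_j$ and $t_{j+1}$, together with one side for each $\sigma_{i-1}$- or $\sigma_{i+1}$-crossing strictly between them: any other generator has index $\le i-2$ or $\ge i+2$, so it involves neither strand at position $i$ nor $i+1$ and contributes no side. Thus the face has $2+a_j+b_j$ sides, where $a_j$ (resp.\ $b_j$) counts the occurrences of $\sigma_{i-1}$ (resp.\ $\sigma_{i+1}$) in the open interval $(t_j,t_{j+1})$. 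The contradiction hypothesis becomes $a_j+b_j\le 1$ for every $j$.

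The main step is to rule out any interval with $a_j=1$. Suppose such an interval $\sigma_i\,w\,\sigma_i$ exists, so $w$ contains a single $\sigma_{i-1}$ and every remaining letter is some $\sigma_k$ with $|k-i|\ge 2$. Each such $\sigma_k$ commutes with $\sigma_i$, so after cyclically permuting $\beta$ one may slide the letters of $w$ to the left of the $\sigma_{i-1}$ past the leading $\sigma_i$, and those to the right past the trailing $\sigma_i$, exposing a contiguous subword $\sigma_i\sigma_{i-1}\sigma_i$ in a positive word representing the same braid as $\beta$. Applying the braid relation $\sigma_i\sigma_{i-1}\sigma_i=\sigma_{i-1}\sigma_i\sigma_{i-1}$ then yields a positive braid on $n+1$ strands whose closure is still $L$ but whose sum of indices is smaller by one, contradicting minimality assumption (b).

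Consequently every interval satisfies $a_j=0$, so the generator $\sigma_{i-1}$ does not appear in $\beta$ at all. Since $i\ge 2$ forces $1\le i-1\le n$, this contradicts assumption (d) and completes the proof. The one delicate point is the commutation step, which is clean here because any non-$\sigma_{i\pm 1}$ generator within the interval has index differing from $i$ by at least two, hence commutes with $\sigma_i$. By contrast, the symmetric attempt to rule out $b_j=1$ by exhibiting $\sigma_i\sigma_{i+1}\sigma_i$ and applying the braid relation would \emph{raise} the sum of indices and so yields no contradiction, which is exactly why the first column must be excluded from the statement.
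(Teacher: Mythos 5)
Your proof is correct and follows essentially the same route as the paper's: both hinge on using commutations of distant generators to expose a subword $\sigma_i\sigma_{i-1}\sigma_i$ and then applying the braid relation to lower the index sum, contradicting minimality assumption (b), with assumption (d) guaranteeing that $\sigma_{i-1}$ actually occurs. The only difference is presentational: the paper directly exhibits a $\ge 4$-sided face incident to a chosen $\sigma_{i-1}$-crossing, whereas you run the same mechanism as a global contradiction over all intervals between consecutive $\sigma_i$'s.
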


\begin{proof}
Let~$i > 1$ denote the index of a column of~$\beta$. Select an occurrence of~$\sigma_{i-1}$ in~$\beta$ 
and let~$f$ denote the face in the~$i$-th column incident to it. By assumption~(d), the top and bottom 
crossings incident with~$f$ exist and are distinct from one another. Suppose for a contradiction that~$f$ 
is not incident with any other crossings. Then a cyclic permutation and a sequence of distant braid moves 
converts~$\beta$ into a minimal index positive braid representation~$\beta'$ of~$L$ with the same index 
sum as~$\beta$ and which contains the subword~$\sigma_{i}\sigma_{i-1}\sigma_{i}$.
Then the braid move~$\sigma_{i}\sigma_{i-1}\sigma_{i}\to\sigma_{i-1}\sigma_{i}\sigma_{i-1}$ 
converts~$\beta'$ into a minimal index positive braid representation~$\beta''$ of~$L$ with lower index 
sum than~$\beta'$, violating assumption~(b). Hence~$f$ is incident with at least one additional crossing, 
so it has at least four sides.
\end{proof}

\subsection{Subsets and subspaces.}
In this section, we describe our choice of subspaces on which to examine the Goeritz form in 
the case of a positive braid closure.
The subspaces are chosen so that the restrictions of the Goeritz forms to them are presented by trisum matrices.
Then we can estimate their signatures using the results of Section \ref{s: tridiagonal} towards the end of proving Theorem \ref{quarter_thm}.

\subsubsection{The black surface}
We define two subsets~$B_0, B_2 \subset H_1(S_B)$ of the first homology of the black chessboard surface, 
obtained by selecting homology classes of certain curves~$\gamma$ winding around white faces. 
\smallskip

Let $B_0$ denote the set of homology classes of the following curves in~$S_B$. First, we take all 
curves~$\gamma$ corresponding to white faces above crossings of index~$i\equiv0\,(\mathrm{mod}~4)$, 
with the following exception: for each index, we omit one curve corresponding to a face with the most sides.
Note that this number of sides is at least four, by Lemma~\ref{face_lemma}. Second, we take all the curves 
corresponding to white faces with two sides above crossings of index~$i\equiv2\,(\mathrm{mod}~4)$.
Third, we take some curves corresponding to white faces with three sides above crossings of 
index~$i\equiv2\,(\mathrm{mod}~4)$. More precisely, we identify a face with at least four sides in this column.
Proceeding upwards from it, we take every other three-sided face we encounter, beginning with the first one.
In this way, we collect at least half of the white faces with three sides in each column of 
index~$\equiv2\,(\mathrm{mod}~4)$.

Similarly, define~$B_2$ just like~$B_0$, but with the role of the indices~$i\equiv0\,(\mathrm{mod}~4)$ 
and~$i\equiv2\,(\mathrm{mod}~4)$ interchanged: see Figure~\ref{B2example} for an example.

\begin{figure}[h]
\def\svgwidth{300pt}
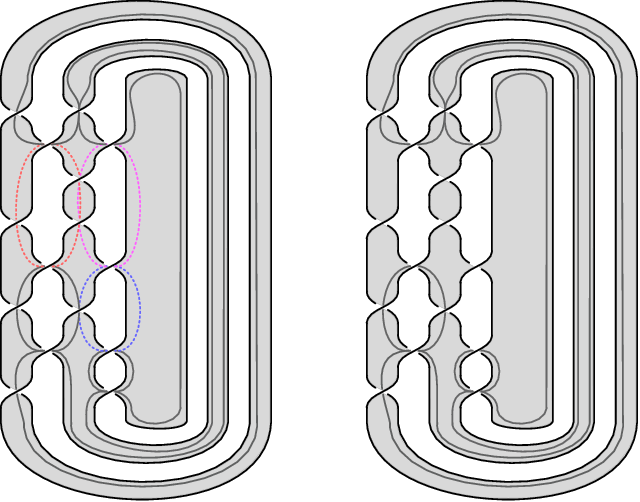
\caption{An example of selecting~$B_2 \subset H_1(S_B)$: the red dashed curve is omitted since it runs 
around a white face with the most sides among all faces of index two. The purple dashed curve is omitted 
since its corresponding face has four crossings. Finally, the blue dashed curve is omitted since it is the 
second curve running around a face with three sides when counting upwards from the purple curve.}
\label{B2example}
\end{figure}

\subsubsection{The white surface}
Now, we define two subsets~$B_1, B_3 \subset H_1(S_W)$ of the first homology of the white chessboard 
surface~$S_W$, as follows.
\smallskip 

Let~$B_1$ denote the set of homology classes of the following curves in~$S_W$. First, we take all 
curves~$\gamma$ corresponding to black faces above crossings of index~$i\equiv1\,(\mathrm{mod}~4)$, 
with the following exception: for each index, we omit one curve corresponding to a face with the most sides.
Once more, Lemma~\ref{face_lemma} implies that this face must have at least four sides, unless~$i=1$ 
and every face  in the first column has three or fewer sides; then assumption~(d) applies to show that this 
face has three sides. Furthermore, we add the curves corresponding to black faces with two sides 
above crossings of index~$i\equiv3\,(\mathrm{mod}~4)$. Finally, we add every other curve corresponding to 
white faces with three sides above crossings of index~$i\equiv3\,(\mathrm{mod}~4)$, exactly as in the 
definition of~$B_0$. 
\smallskip

Finally, define~$B_3$ just like~$B_1$, but with the role of the indices~$i\equiv1\,(\mathrm{mod}~4)$ 
and~$i\equiv3\,(\mathrm{mod}~4)$ interchanged. The exception for the leftmost column is now the following: 
when we add every other face with three sides, we round down in case the number of such faces is odd and 
there is no face with more than three sides. 

\subsubsection{Subspaces and Gram matrices.}
Define~$X_j$ to be the subspace of homology generated by~$B_j$,~$j=0,1,2,3$. Note that~$B_j$ is a basis 
of~$X_j$ for each~$j$. Define~$G_j$ to be the Gram matrix of~$B_j$ with respect to the relevant Goeritz 
form (i.e.~$G_B$, if~$j$ is even, and~$G_W$, if~$j$ is odd).
\smallskip

The first lemma in this section is more or less immediate from the construction.

\begin{lem}
\label{form_lem}
For each~$j=0,1,2,3$, the matrix~$G_j$ is a direct sum of tridiagonal matrices~$T(2^{\underline a})$, 
tridiagonal matrices~$T(2^{\underline a},1,2^{\underline b})$, and trisum 
matrices~$M_i$,~$1 \le i \le n$,~$i \equiv j \pmod 4$.
\end{lem}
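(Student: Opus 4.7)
The plan is to compute the Gram matrix~$G_j$ entry-by-entry from Lemma~\ref{Goer_calc} and to read off its block decomposition directly from the combinatorics of~$B_j$.

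First, I would enumerate the nonzero entries of~$G_j$. The diagonal entries come from Lemma~\ref{Goer_calc}(i): an $n$-sided face contributes~$4-n$. Since every basis curve in~$B_j$ lies in a column of index $\equiv j$ or $j+2 \pmod 4$, only cases~(ii), (iii) and~(iv) of Lemma~\ref{Goer_calc} are relevant for off-diagonal entries. Case~(ii) contributes an off-diagonal $-1$ exactly when two selected faces are vertically consecutive in the same column; case~(iv) contributes an off-diagonal $+1$ exactly when two selected faces lie in columns of index differing by $2$ and share a crossing of intermediate index; case~(iii) contributes $0$ in all remaining situations, in particular whenever two column indices differ by at least $4$, since then the two faces share no boundary crossing at all. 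I would then invoke Remark~\ref{Goeritz_offdiag_remark} to reorient basis curves locally (within each would-be direct summand) so that every nonzero off-diagonal entry becomes~$+1$, without affecting the diagonal.

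Second, I would read off the within-column structure. Within a single column, two selected faces interact only if no other face of the column lies vertically between them, so the column contributes a direct sum of tridiagonal matrices, one per maximal run of consecutive selected faces. In a column of index $\equiv j \pmod 4$, every face is selected except one with the most sides, which has at least four sides by Lemma~\ref{face_lemma}; the column therefore contributes at most two tridiagonal pieces $T(d_1,\dots,d_r)$ with $d_k = 4-n_k$ an arbitrary integer. In a column of index $\equiv j+2 \pmod 4$, the selected faces are every 2-sided face and every other 3-sided face, and the ``every other'' rule ensures that between any two selected 3-sided faces lies an unselected face; each maximal run therefore contains at most one 3-sided face and has shape $T(2^{\underline a})$ or $T(2^{\underline a},1,2^{\underline b})$.

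Third, I would incorporate the cross-column interactions. A 2-sided face has no side crossings and is therefore inert with respect to case~(iv); each 3-sided face has exactly one side crossing and hence attaches via a case~(iv) entry to a single face in one of the two adjacent columns. A pure 2-sided run, or a run whose 3-sided face attaches only to the face omitted from the adjacent core column, therefore splits off from $G_j$ as a direct summand of type $T(2^{\underline a})$ or $T(2^{\underline a},1,2^{\underline b})$. Every other such run attaches through its unique 3-sided face to a selected face in some core column~$i$ with $i \equiv j \pmod 4$, and collecting all blocks attached to the tridiagonal pieces of that column produces the trisum matrix~$M_i$.

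The step I expect to be the main obstacle is verifying the trisum compatibility condition $d_{g(i)} \le 0$: every core face receiving a block attachment must have at least four sides. A 3-sided block face always attaches to a core face with at least three sides, but the borderline case in which a 3-sided block face attaches to a 3-sided core face must be excluded. I plan to exploit the remaining freedom in the construction --- the choice of which face to omit from each core column, together with the choice of the $\ge 4$-sided face from which the ``every other'' rule begins in each column of index $\equiv j+2 \pmod 4$ --- to ensure that every block attaches to a core face of at least four sides. Should a bad pairing nevertheless occur, the corresponding run should be reabsorbable as an isolated direct summand of type $T(2^{\underline a},1,2^{\underline b})$, preserving the claimed decomposition of~$G_j$.
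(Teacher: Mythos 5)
Your overall strategy---computing entries from Lemma~\ref{Goer_calc}, reorienting via Remark~\ref{Goeritz_offdiag_remark}, splitting each column into maximal runs, and attaching each run through its unique $3$-sided face to a core column---is the same as the paper's, and your within-column analysis (at most one $3$-sided face per run, hence shape $T(2^{\underline a})$ or $T(2^{\underline a},1,2^{\underline b})$) is correct. But there is a genuine gap at exactly the point you flag, and your proposed repairs do not close it. The missing ingredient is the consequence of the minimality assumption~(b): as in the proof of Lemma~\ref{face_lemma}, $\beta$ contains no subword $\sigma_{i}\sigma_{i-1}\sigma_{i}$ even after cyclic permutation and distant braid moves, so a $3$-sided face in column $k$ always has its unique side crossing of index $k+1$, never $k-1$. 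This single observation does two jobs at once. First, it gives the directionality of attachments: $3$-sided faces in column $i-2$ attach to column $i$, while $3$-sided faces in column $i+2$ attach to column $i+4$ and not to column $i$, which is what makes the pairs of columns $(i-2,i)$ split off as direct summands and puts all blocks of $M_i$ on one side. Second, it makes the trisum condition $d_{g(i)}\le 0$ automatic: a core face in column $i$ receiving an attachment has a crossing of index $i-1$ on its boundary, and since it cannot be a $3$-sided face with a left side crossing, it has at least four sides, so its diagonal entry $4-s$ is nonpositive.

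Without this input, your fallbacks fail. The freedom in the construction is insufficient: only one face per core column is omitted and the ``every other'' rule only lets you choose a parity, so several $3$-sided block faces could in principle all attach to distinct $3$-sided core faces. And a ``bad'' run cannot simply be ``reabsorbed as an isolated direct summand'': the case~(iv) coupling entry is genuinely nonzero, so the matrix is not a direct sum there, and discarding basis vectors to force one would shrink $\dim X_j$ and break Lemma~\ref{dimension_lem} downstream. You need to prove that the bad configuration never occurs, which is exactly what the no-$\sigma_{i}\sigma_{i-1}\sigma_{i}$ argument provides.
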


\begin{proof} 
Fix a value $j=0,1,2,3$ and an index $i \equiv j \pmod 4$, $1 \le i \le n$.
As in the proof of Lemma~\ref{face_lemma}, assumption~(b) implies that there is no appearance 
of $\sigma_{i}\sigma_{i-1}\sigma_{i}$ in $\beta$, even after distant braid moves.
It follows that a 3-sided face in column~$i-2$ shares two crossings with faces in the same column 
and one crossing with a face in column~$i$; hence no 3-sided face in column~$i+2$ shares a 
crossing with a face in column~$i$.This shows that the subspace of homology generated by the 
curves we selected for the base~$B_j$ from column~$i-2$ and~$i$ gives rise to a direct summand of~$G_j$.
We now show that this direct summand is in turn a direct sum of tridiagonal matrices~$T(2^{\underline a})$, 
tridiagonal matrices~$T(2^{\underline a},1,2^{\underline b})$, and a trisum matrix~$M_i$.
\smallskip

The core of the trisum matrix~$M_i$ corresponds to the faces selected for~$B_j$ from the~$i$-th column. 
It is a tridiagonal matrix by Lemma~\ref{Goer_calc}. In fact, Lemma~\ref{Goer_calc} gives off-diagonal 
coefficients~$-1$, but following Remark~\ref{Goeritz_offdiag_remark} we may change the orientation of every 
other curve~$\gamma$ winding around a face from the~$i$-th column in order to have all 
off-diagonal coefficients~$1$. Further, the trisum matrix~$M_i$ has one block for each 3-sided face in 
column~$i - 2$ which shares a crossing with one of the selected faces in the~$i$-th column. 
This block is of the form~$T(2^{\underline a},1,2^{\underline b})$, where~$a,b\ge0$. 
This follows Lemma~\ref{Goer_calc} as well as the care we took in selecting every other 3-sided face in 
column~$i - 2$: at most one 3-sided face can appear in any chain of faces\footnote{By a chain of faces we 
mean a linearly ordered set of faces in the same column such that two faces share a crossing exactly if they 
are adjacent in the linear order.} 
we add from the column~$i-2$ during the construction of the base~$B_j$. The off-diagonal coefficient~$1$ we need 
for each block of the trisum matrix is given by~(iv) in Lemma~\ref{Goer_calc}. Again, we might need to 
invoke Remark~\ref{Goeritz_offdiag_remark} and adjust the orientations of the curves winding around the 
faces in column $i-2$ in order to have all off-diagonal coefficients~1.
\smallskip

A 3-sided face in column~$i-2$ might share its crossing with the face from the~$i$-th column we did not select 
in the construction of~$B_j$. Such a 3-sided face is responsible for a direct 
summand~$T(2^{\underline a},1,2^{\underline b})$. Finally, a chain of 2-sided faces in column~$i-2$ 
we added during the construction of the base~$B_j$ is responsible for a direct summand~$T(2^{\underline a})$. 
One more time, we might need to invoke Remark~\ref{Goeritz_offdiag_remark} and adjust the orientations 
of the curves winding around faces in column $i-2$ for off-diagonal coefficients 1.
\end{proof}

The second lemma follows directly from counting the sizes of the bases~$B_j$.

\begin{lem}
\label{dimension_lem}
\[
\sum_{j=1}^4 \mathrm{dim}(X_j) \ge f_2 + \frac{f_3-1}{2} + \mathrm{cr}(D) - n.
\]
\end{lem}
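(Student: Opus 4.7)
The plan is to compute each $|B_j|=\dim(X_j)$ by summing contributions from the three parts of its construction, then to add the four totals. Write $F_i$ for the number of first-category faces in column $i$ and $F_i^{(k)}$ for those having $k$ sides, so that $\sum_i F_i=\mathrm{cr}(D)$, $\sum_i F_i^{(2)}=f_2$, and $\sum_i F_i^{(3)}=f_3$. Inside each $B_j$ the three parts draw from disjoint columns (the first part from columns $i\equiv j\pmod{4}$, the second and third from columns $i\equiv j+2\pmod{4}$), so no internal overcounting occurs and $|B_j|$ equals the sum of the three part-sizes.

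I would first sum the first parts over $j=0,1,2,3$. Every column $1\le i\le n$ is treated by exactly one $B_j$ and contributes $F_i-1$ basis elements, since one face of maximal side number is omitted per column. This yields a total first-part contribution of $\mathrm{cr}(D)-n$. Next, every $2$-sided face above a crossing in column $i$ is selected in the second part of $B_{(i+2)\bmod 4}$, so the second parts contribute exactly $f_2$. Finally, each column $i$ contributes at least $\lceil F_i^{(3)}/2\rceil$ three-sided faces to the third part of $B_{(i+2)\bmod 4}$, with the single exception that the rounding rule for $B_3$ on column~$1$ gives $\lfloor F_1^{(3)}/2\rfloor$ when column~$1$ has no face of at least four sides. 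Since $\lceil m/2\rceil\ge m/2$ and $\lfloor m/2\rfloor\ge m/2-1/2$, the total third-part contribution is at least $f_3/2-1/2$.

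Adding the three sums yields $\sum_j|B_j|\ge \mathrm{cr}(D)-n+f_2+(f_3-1)/2$, which is the claimed bound. The only delicate point to verify is the exceptional column-$1$ case in $B_1$ and $B_3$, where Lemma~\ref{face_lemma} need not supply a $\ge 4$-sided face; this is precisely where the single $-1/2$ enters, and it is fully absorbed into $(f_3-1)/2$. Beyond this careful bookkeeping I do not expect any step to be a real obstacle.
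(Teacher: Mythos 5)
Your proposal is correct and follows essentially the same counting as the paper: the first parts contribute $\mathrm{cr}(D)-n$ (one omitted face per column), the second parts contribute exactly $f_2$, and the third parts contribute at least $(f_3-1)/2$, with the single loss of $\tfrac12$ coming from the rounding-down exception for $B_3$ in the leftmost column. Your bookkeeping with the $F_i^{(k)}$ is just a more explicit rendering of the paper's argument.
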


\begin{proof}
For each index~$i=1,\dots,n$, every 2-sided face in the~$i$-th column contributes a basis element 
to~$B_{j+2}$, where~$0 \le j+2 \le 3$ and~$i+2 \equiv j+2 \pmod 4$. The total number of these elements, 
summed over~$i=1,\dots,n$, is~$f_2$. At least half of the 3-sided faces in the~$i$-th column contribute a 
basis element to~$B_{j+2}$, with the exception of~$i=1$, where at least one less than half of them contribute.
The total number of these elements is thus~$\ge (f_3-1)/2$. Lastly, all but one face in the~$i$-th column 
contributes a basis element to~$B_j$, for each index~$0 \le j \le 3$ and~$i \equiv j \pmod 4$.
The total number of these elements is $\mathrm{cr}(D)-n$. The stated inequality now issues directly.
\end{proof}

The final lemma concerns the signature of the matrices~$G_j$.

\begin{lem}
\label{signature_lem}
We have
\[
\sum_{j=0}^3 \sigma(G_j) \ge -\frac{1}{2}(f_2 + f_3) + 2.
\]
\end{lem}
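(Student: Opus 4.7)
By Lemma~\ref{form_lem}, for each~$j\in\{0,1,2,3\}$ the matrix~$G_j$ decomposes as a direct sum of trisum matrices~$M_i$ (one for each column index~$i$ with~$1\le i\le n$ and~$i\equiv j\pmod 4$) together with separate tridiagonal summands of the forms~$T(2^{\underline a})$ and~$T(2^{\underline a},1,2^{\underline b})$. Since signatures are additive over direct sums,~$\sigma(G_j)$ is the sum of the signatures of these summands; the plan is to bound each summand using the tools of Section~\ref{s: tridiagonal}, sum across all four~$G_j$, and then feed in equation~(\ref{constraint2}) and Lemma~\ref{dimension_lem}.

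First, I would apply Proposition~\ref{extended_bound_prop} to each of the~$n$ trisum matrices~$M_i$, obtaining~$\sigma(M_i)\ge -\tfrac12+\tfrac12\underline{tr}(M_i)+\tfrac12 b(M_i)$. For each separate~$T(2^{\underline a})$ summand I would use Lemma~\ref{pathsignature_lemma} for the sharp value~$\sigma=\dim$, and for each separate~$T(2^{\underline a},1,2^{\underline b})$ summand I would use Porism~\ref{signature_porism} for~$\sigma\ge\tfrac12\dim$. Summed over all summands and all four~$G_j$, these bounds yield an inequality of the shape
\[
\sum_{j=0}^{3}\sigma(G_j)\ \ge\ -\tfrac{n}{2}\ +\ \tfrac12\sum_{i=1}^n\underline{tr}(M_i)\ +\ \tfrac12 D^{(2)}\ +\ \tfrac12 D_a,
\]
where~$D^{(2)}$ is the total non-core dimension (i.e.,~$\sum_i b(M_i)$ plus the total dimension of the separate summands) and~$D_a$ is the total dimension of the pure~$T(2^{\underline a})$ summands; the extra~$\tfrac12 D_a$ records the gap between the sharp Lemma~\ref{pathsignature_lemma} value and the weaker Porism estimate used for the other separate summands.

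Second, I would feed in structural bounds. The core of~$M_i$ has diagonal entry~$4-s$ for each retained face of side~$s$, so~$|\underline{tr}(M_i)|\le\sum_{s\ge 5}(s-4)f_{s,i}$ with the maximum-sided face further omitted; applying equation~(\ref{constraint2}) yields~$\sum_i\underline{tr}(M_i)\ge 4-2f_2-f_3$, sharpened by~$\ge 1$ for every column whose omitted maximum face has at least~$5$ sides. Lemma~\ref{dimension_lem} translates directly into~$D^{(2)}\ge f_2+\tfrac{f_3-1}{2}$. Substituting into the aggregate inequality and simplifying is intended to produce the desired bound~$\sum_{j=0}^3\sigma(G_j)\ge -\tfrac12(f_2+f_3)+2$.

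The principal obstacle is that a naive substitution of the three standard bounds leaves a residual~$-\tfrac n2$ deficit against the claimed additive constant~$+2$ on the right-hand side. Closing this gap requires carefully leveraging additional structural gains: the bonus~$\tfrac12 D_a$ from the sharpness in Lemma~\ref{pathsignature_lemma}, the per-column trace improvements for columns whose maximum face has~$\ge 5$ sides, and the Case~$4$ improvements produced in the proof of Proposition~\ref{extended_bound_prop} on blocks~$T(2^{\underline a},1,2^{\underline b})$ with~$a,b\ge 1$ and~$a+b\ge 3$; the boundary inequality~$s+s'\ge 4$ from equation~(\ref{constraint}) supplies the final~$+2$ constant. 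Orchestrating this combinatorial accounting cleanly across all four subspaces simultaneously is the heart of the argument.
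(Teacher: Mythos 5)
Your setup matches the paper's: the same decomposition of $\bigoplus_j G_j$ into the trisum matrices $M_i$ plus a remainder $T$ of tridiagonal summands, the same three signature tools (Proposition~\ref{extended_bound_prop}, Lemma~\ref{pathsignature_lemma}, Porism~\ref{signature_porism}), and the same structural inputs (Inequality~\eqref{constraint2} and the face-count of the non-core dimensions). You also correctly diagnose the crux: applying Proposition~\ref{extended_bound_prop} to all $n$ trisum matrices leaves a residual deficit of $-\tfrac{n}{2}$ that must be absorbed. But you do not actually absorb it, and the savings you propose are not the ones that work. The bonus $\tfrac12 D_a$ from the definite $T(2^{\underline a})$ summands and the Case~4 improvements inside Proposition~\ref{extended_bound_prop} are not controlled by $n$ in any useful way (there may be no such summands at all), and $s+s'\ge 4$ is already spent in deriving \eqref{constraint2}; none of these can be guaranteed to cancel a term proportional to $n$.

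The missing idea is a trichotomy of the columns. Let $p$ be the number of indices with $\underline{tr}(M_i)<0$ and $q$ the number with $\underline{tr}(M_i)=b(M_i)=0$. For the $q$ columns, you should not invoke Proposition~\ref{extended_bound_prop} at all: Remark~\ref{diagonal_sum_remark}(1) gives $\sigma(M_i)\ge 0$ with no $-\tfrac12$ penalty, saving $\tfrac{q}{2}$. For the $p$ columns, the omitted maximal face has at least $5$ sides, so the trace bound sharpens to $\sum_i\underline{tr}(M_i)\ge p-(2f_2+f_3-4)$, saving $\tfrac{p}{2}$ (you noticed this one). The remaining $n-p-q$ columns each satisfy $b(M_i)\ge 1$, hence each contributes at least one $3$-sided face to the count $f_3'$ of $3$-sided faces used in the bases; since $\sum_i b(M_i)+\dim(T)=f_2+f_3'$ holds as an exact equality, the term $\tfrac12 f_3'\ge\tfrac12(n-p-q)$ absorbs the rest of the deficit. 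Note that this requires using $f_3'\ge n-p-q$ rather than your substitution $D^{(2)}\ge f_2+\tfrac{f_3-1}{2}$ (which comes from the basis-size count of Lemma~\ref{dimension_lem} and is the wrong bound here: it would leave both an uncancelled $-\tfrac{n-p-q}{2}$ and a stray $+\tfrac{f_3}{4}$). Without the $p$, $q$, $f_3'$ bookkeeping the argument does not close.
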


\begin{proof}
By Lemma \ref{form_lem},
\begin{equation}
\label{e: g sum}
\bigoplus_{j=0}^3 G_j = \left( \bigoplus_{i=1}^n M_i \right) \oplus T,
\end{equation}
where~$T$ is the direct sum of some tridiagonal matrices of the form~$T(2^{\underline a})$, $a \ge 1$, 
and~$T(2^{\underline a}, 1, 2^{\underline b})$,~$a,b \ge 0$. 
By Lemma \ref{pathsignature_lemma} and Porism~\ref{signature_porism}, we have
\begin{equation}
\label{e: T bound}
\sigma(T) \ge \frac12 \dim(T).
\end{equation}

We also have
\begin{equation}
\label{e: k bound}
\sum_{i=1}^n b(M_i) + \dim(T) = f_2 + f_3',
\end{equation}
where~$f_3'$ denotes the number of 3-sided faces used in the construction of the bases~$B_j$.
\smallskip

Let~$p$ denote the number of indices~$1 \le i \le n$ such that~$\underline{tr}(M_i) < 0$.
The negative diagonal coefficients in the core of the matrix~$M_i$ are the values of the form~$4-s$, 
where~$s \ge 5$ is the number of sides in a face in the~$i$-th column. Since we omit at least one face 
with~$\ge 5$ sides in forming the core of~$M_i$ for each of~$p$ indices, it follows that
\[
\sum_{i=1}^n \underline{tr}(M_i) \ge p+ \sum_{i\ge5} (4-i)f_i.
\]
Invoking Inequality~(\ref{constraint2}) leads to the bound
\begin{equation}
\label{e: d bound}
\sum_{i=1}^n \underline{tr}(M_i) \ge p - (2f_2 + f_3 - 4).
\end{equation}

Let~$q$ denote the number of indices~$1 \le i \le n$ such that~$\underline{tr}(M_i) = b(M_i) = 0$.
For each such index, (1) of Remark~\ref{diagonal_sum_remark} gives
\[
\sigma(M_i) \ge 0 = \frac12 \underline{tr}(M_i) + \frac12 b(M_i).
\]
For the other~$n-q$ indices, we bound~$\sigma(M_i)$ from below by Proposition \ref{extended_bound_prop}.
Summing all of these bounds, we obtain
\begin{equation}
\label{e: sigma bound}
\sum_{i=1}^n \sigma(M_i) \ge -\frac{n-q}{2} + \frac12 \sum_{i=1}^n \underline{tr}(M_i) 
+ \frac12 \sum_{i=1}^n b(M_i).
\end{equation}

It follows that the number of indices~$1 \le i \le n$ for which~$\underline{tr}(M_i) = 0$ and~$b(M_i) \ge 1$ 
is equal to~$n-p-q$. Thus,~$f_3' \ge n-p-q$, which rearranges to
\begin{equation}
\label{e: f3' bound}
p+q -n + f_3' \ge 0.
\end{equation}

Consequently,
\begin{eqnarray*}
\sum_{j=0}^3 \sigma(G_j) &\overset{\eqref{e: g sum}}{=}& \sum_{i=1}^n \sigma(M_i) + \sigma(T) \\
&\overset{\eqref{e: T bound} + \eqref{e: sigma bound}}{\ge}& \frac{q-n}{2} + 
	\frac12 \sum_{i=1}^n \underline{tr}(M_i) + \frac12 \sum_{i=1}^n b(M_i) + \frac12 \dim(T) \\
&\overset{\eqref{e: k bound} + \eqref{e: d bound}}{\ge}& \frac{q-n}{2} + 
	\frac12 (p - (2f_2 + f_3 - 4))+ \frac12 (f_2+f_3') \\
&\overset{\eqref{e: f3' bound}}{\ge} & - \frac12 (f_2 +f_3) +2,
\end{eqnarray*}
as desired.
\end{proof}

\subsection{The combined signature bound}
We are nearly ready to put everything together to prove Theorem~\ref{quarter_thm}.
The last fact we shall use is that if~$G : H \times H \to \mathbb{Z}$ is a symmetric, bilinear form, 
and~$X \subset H$ is a subspace, then
\begin{equation}
\label{e: subspace bound}
\sigma(G) + \dim H \ge \sigma(G |_{X \times X}) + \dim X.
\end{equation}
That is because the left side equals two times the dimension of the largest subspace of~$H$ on 
which~$G$ is positive definite, while the right side equals two times the dimension of the largest 
subspace of~$X \subset H$ on which~$G$ is positive definite.

\begin{proof}[Proof of Theorem \ref{quarter_thm}]
As argued at the outset of the section, we may assume that hypotheses~(a)-(d) hold for~$L$, 
so the results of this section pertain to it. Using {Theorem}~\ref{Gordon-Litherland} twice~$(\ast)$, 
then~\eqref{e: subspace bound} four times~$(\ast\ast)$ and finally Lemma~\ref{signature_lem}, 
Lemma~\ref{dimension_lem} and \eqref{e: homology sum} $(\ast\ast\ast)$, 
we obtain
\begin{eqnarray*}
4\sigma(L) - 2\mathrm{cr}(D) &\overset{(\ast)}=&  2\sigma(G_B) + 2\sigma(G_W) \\
 &\overset{(\ast\ast)}\ge& \sum_{j=0}^3 \left(\sigma(G_j) + \dim(X_j)\right) - 
 	2 \left(\mathrm{dim}(H_1(S_B)) + \mathrm{dim}(H_1(S_W)\right)\\
 &\overset{(\ast\ast\ast)}\ge& \left( -\frac{1}{2}(f_2 + f_3) + 2 \right) + \left( f_2 + \frac{f_3-1}{2} + 
 	\mathrm{cr}(D) - n \right) - 2 \mathrm{cr}(D) \\
 &=& \frac{3}{2} + \frac{f_2}{2} -\mathrm{cr}(D)-n.
\end{eqnarray*}
Thus,
 \begin{align}
 \label{final_eq}
 4\sigma(L) \ge \frac{3}{2} + \frac{f_2}{2} + \mathrm{cr}(D)-n \ge \frac{3}{2} + b_1(L), 
 \end{align}
using \eqref{Betti} in the second inequality.
Both values $\sigma(L)$ and $b_1(L)$ are integers. 
Thus, we obtain the improvement $4 \sigma(L) \ge 2 + b_1(L)$, and dividing through by 4 gives 
the bound in Theorem~\ref{quarter_thm}.
\end{proof}

%%%%%%%%%%%%%%%%%%%%%%%%%%%%%%%%%%%%%%%%%%%%%%%%
%%%%%%%%%%%%%%%%%%%%%%%%%%%%%%%%%%%%%%%%%%%%%%%%

\section{Discussion}
\label{s: discussion}

The proof of Theorem~\ref{quarter_thm} suggests room for improvement in the direction of Feller's 
conjecture.
However, we could not quickly see how to do more, and examples suggest the need for more ideas.
On the one hand, Proposition~\ref{diagonal_sum_prop} is clearly suboptimal if, say,~$\underline{tr}(M) < -2n$, 
which suggests improving it towards the end of improving Theorem~\ref{quarter_thm}.
On the other hand, it is easy to give examples for which the relevant tridiagonal matrices 
have only $-1$s and $-2$s on the diagonal.
Then the bound in Proposition~\ref{diagonal_sum_prop} seems to be sharp, and it correspondingly 
seems hard to improve on Theorem~\ref{quarter_thm} for these examples.
In another direction, Equation~\eqref{final_eq} shows that the bound of Theorem~\ref{quarter_thm} can be 
improved in case the proportion of faces with two sides to the total number of faces is large.
However, this proportion can be arbitrarily small, as witnessed by the following example.

\begin{example}
\label{foursides_example}
\emph{
Consider the positive braid~$\beta = (\sigma_1 \cdots \sigma_n\sigma_n \cdots \sigma_1)^2 \in B_{n+1}$. 
Even up to conjugation, no braid moves can be applied to this braid. Furthermore, its closure~$L$ 
is a nontrivial and nonsplit link. Finally~$\beta$ is of minimal index representing~$L$, since the 
link~$L$ has~$n+1$ components, so a braid representing it must have at least~$n+1$ strands. 
Independently of~$n$, its standard diagram~$D$ has~$f_2 = 4$,~$f_4 = \mathrm{cr}(D)-4$, 
and~$f_k = 0$ for~$k \ne 2,4$. The signature of such links~$L$ was computed by the second 
author in Remark 15 of~\cite{Lie20}. It equals approximately two-thirds of the first Betti number. 
More precisely,~$\sigma(L) = 2n+1$ and~$\mathrm{null}(L) = n-1$.
}
\end{example}

It seems that in order to obtain a bound for the signature of positive braids via Goeritz forms that is 
better than the quarter from Theorem~\ref{quarter_thm}, one would have to deal with subspaces 
of the first homology groups of the chessboard surfaces that contain more faces with four or more sides. 
In our proof, the specific almost tridiagonal form of the matrices for the Goeritz forms restricted to our 
subspaces~$X_j$ played a crucial role. Since faces with four sides can share crossings with faces to 
both the right \emph{and} to the left, a division of the matrix for the Goeritz form into blocks that correspond 
to columns two indices apart as in Lemma~\ref{form_lem} should be impossible in general.  Perhaps more 
complicated partitions or also a new and more global approach would be necessary for further 
improvements.

\bibliographystyle{siam}

\end{document}